\newtheorem{theorem}{Theorem}[section]
\newtheorem{lemma}[theorem]{Lemma}
\newtheorem{corollary}[theorem]{Corollary}
\theoremstyle{definition}
\newtheorem{remark}[theorem]{Remark}
\numberwithin{equation}{section}
\begin{document}
\title[New differential Harnack inequalities]
{New differential Harnack inequalities for nonlinear heat equations}
\author{Jia-Yong Wu}
\address{Department of Mathematics, Shanghai Maritime University,
1550 Haigang Avenue, Shanghai 201306, China}
\email{jywu81@yahoo.com}

\subjclass[2010]{Primary 53C44.}

\dedicatory{}

\date{Manuscript received October 25, 2015.}

\keywords{Harnack inequality;
Nonlinear heat equation; Ricci flow.}

\begin{abstract}
We prove constrained trace, matrix and constrained matrix Harnack
inequalities for the nonlinear heat equation
$\omega_t=\Delta\omega+a\omega\ln \omega$ on closed manifolds.
We also derive a new interpolated Harnack inequality for the equation
$\omega_t=\Delta\omega-\omega\ln\omega+\varepsilon R\omega$
on closed surfaces under the $\varepsilon$-Ricci flow. Finally we prove
a new differential Harnack inequality for the equation
$\omega_t=\Delta\omega-\omega\ln\omega$ under the Ricci flow
without any curvature condition. Among these Harnack inequalities,
the correction terms are all time-exponential functions, which are
superior to time-polynomial functions.
\end{abstract}
\maketitle

\section{Introduction}
Recently, Cao, Fayyazuddin Ljungberg and Liu ~\cite{[CaoLL]} improved gradient
estimates of Ma \cite{[Ma]} and Yang \cite{[Yang]}. They proved a
new differential Harnack inequality for any positive solution $\omega(x,t)$
to the nonlinear heat equation
\begin{equation}\label{hform}
\frac{\partial}{\partial t}\omega=\Delta \omega+a\,\omega\ln \omega,
\end{equation}
where $a$ is a nonzero real constant, on a complete smooth manifold.

\vspace{.1in}

\noindent \textbf{Theorem A} (Cao, Fayyazuddin Ljungberg and Liu~\cite{[CaoLL]}).
\emph{Let $(M^n, g)$ be a $n$-dimensional complete manifold
without boundary with nonnegative Ricci curvature. Let $\omega(x,t)$
be a positive solution to \eqref{hform}. Then in any of the three cases:
\begin{enumerate}
\item[(i)] $a>0$ and $M$ is closed,
\item[(ii)] $a<0$ and $M$ is closed,
\item[(iii)] $a>0$ and $M$ is complete noncompact,
\end{enumerate}
the following inequality holds for all $x\in M^n$, $t>0$:
\begin{equation}\label{HarCLL}
\Delta\ln\omega+\frac{an}{2(1-e^{-at})}\geq 0.
\end{equation}}

Since equation \eqref{hform} is related to the gradient
Ricci soliton (see Ma \cite{[Ma]}) and the logarithmic Sobolev constant
(see L. Gross \cite{[Gross]}), the Harnack inequality \eqref{HarCLL} is
useful in understanding these geometric invariants, even the singularities
of the Ricci flow. The essential idea of proving Theorem A is the parabolic
maximum principle, which was ever used by Li and Yau \cite{[LY]} to prove
differential Harnack estimates for the heat equation. One novel feature
of Cao-Fayyazuddin Ljungberg-Liu Harnack inequalities is the correction term,
which is an exponential function:
\begin{equation}\label{exp}
\frac{an}{2(1-e^{-at})}.
\end{equation}
This term is obvious different from the polynomial correction term: $\frac{n}{2t}$, which
appears in the following classical Li-Yau Harnack inequality.

\vspace{.1in}

\noindent \textbf{Theorem B} (Li and Yau~\cite{[LY]}).
\emph{Let $(M^n, g)$, be a complete Riemannian manifold with
nonnegative Ricci curvature. Let $\omega(x,t)$ be a positive
solution to the linear heat equation. Then for all $x\in M^n$, $t>0$:
\[
\Delta\ln\omega+\frac{n}{2t}\geq 0.
\]}

As we all know, Li-Yau Harnack inequality is sharp for the linear heat equation and
the equalty case holds for the fundamental solution
\[
H(x,t):=\frac{1}{(4\pi t)^{n/2}}\exp\left(-\frac{||x||^2}{4t}\right)
\]
of linear heat equation in Euclidean space. For the nonlinear heat
equation \eqref{hform}, Cao, Fayyazuddin Ljungberg and Liu showed
that the Harnack inequality \eqref{HarCLL} is sharp in case (iii) of
Theorem A. That is, there exists a family of particular solutions of
\eqref{hform} on $\mathbb{R}^n$ (see \cite{[PV]})
\[
\omega(x,t)=-\frac{a||x||^2}{4(1-e^{-at})}-\frac n2e^{-at}\ln|1-e^{-at}|+Ce^{at},
\]
where $C\in \mathbb{R}$ is an arbitrary constant,
such that the Harnack inequality \eqref{HarCLL} becomes an equality. The
Harnack inequality \eqref{HarCLL}  with new correction term \eqref{exp}
stimulates us to find more superior possible differential Harnack inequalities
of the nonlinear heat equation \eqref{hform} or its related equations.

In this paper, inspired by the work of Cao, Fayyazuddin Ljungberg and Liu
~\cite{[CaoLL]}, we can derive constrained trace Harnack inequalities, matrix
Harnack inequalities and constrained matrix Harnack inequalities for the
nonlinear heat equation
\[
\omega_t=\Delta\omega+a\omega\ln\omega
\]
on closed manifolds with fixed metric. We also can improve previous
interpolated Harnack inequality in \cite{[Wu1]} for the nonlinear heat equation
\[
\omega_t=\Delta\omega-\omega\ln\omega+\varepsilon R\omega \quad(\varepsilon\geq 0)
\]
on closed surfaces under the $\varepsilon$-Ricci flow. Finally we prove
a new differential Harnack inequality for the nonlinear heat equation
\[
\omega_t=\Delta\omega-\omega\ln\omega
\]
on closed manifolds along the Ricci flow without any curvature assumption.
Among our differential Harnack
inequalities, the correction terms are all time-exponential functions,
which are superior to time-polynomial functions.

The study of differential Harnack estimates for the heat equation originated
in Li and Yau \cite{[LY]} (a precursory form appeared in \cite{[ArBe]}). This
method was later brought into the study of the Ricci flow by Hamilton
\cite{[Ham2]} and played an important role in the singularity analysis of
the Ricci flow. Hamilton \cite{[Ham1]} also generalized the Li-Yau Harnack
inequality to a matrix Harnack form on a class of manifolds. These results
were furthermore extended to constrained, matrix, and interpolated Harnack
inequalities by Chow and Hamilton \cite{[ChHa]}, Chow \cite{[Chow3]},
Ni \cite{[Ni]} and Li \cite{[Liy]}. See \cite{[Ni2]} for excellent discussions
on this subject.

Recently, differential Harnack inequalities for heat-type equations coupled with
the Ricci flow have become an important object. This subject was ever explored
by Chow and Hamilton \cite{[ChHa]}, Chow and Knopf \cite{[ChKn]}, etc.
In particular, Perelman \cite{[Pe]} discovered differential Harnack inequalities
for the fundamental solution to the backward heat equation under the Ricci flow
without any curvature assumption. This spectacular result is a crucial step in
proving Poincar\'e Conjecture. Perelman's result was extended to all positive
solutions by Cao \cite{[Caox]} and independently by Kuang and Zhang \cite{[KuZh]};
whereas scalar curvature is required to be nonnegative. For more work and progress
in this direction; see, for example,~\cite{[BaCaoPu]},~\cite{[CaoGT]},~\cite{[CaoxHa]},
~\cite{[ChTaYu]},~\cite{[Guen]},~\cite{[GuoHe]}~\cite{[GuoIs]},~\cite{[GuoIs2]},
~\cite{[HHL]},~\cite{[JLi]},~\cite{[Liu]}, ~\cite{[Wu1]},~\cite{[WuZheng]} and
~\cite{[Zhang]}.

This paper is organized as follows. In Section \ref{sec2}, we will derive
constrained trace, matrix and constrained matrix differential Harnack
inequalities for the equation \eqref{hform}. The proof relies on
the parabolic maximum principle. In Section \ref{sec3}, we
will prove an interpolated Harnack inequality for the equation $\omega_t=\Delta\omega-\omega\ln\omega+\varepsilon R\omega$ on
closed surfaces under the $\varepsilon$-Ricci flow. In Section \ref{sec4},
we will improve a previous Harnack inequality for the equation $\omega_t=\Delta\omega-\omega\ln\omega$ on closed manifolds
under the Ricci flow.

\textbf{Acknowledgement}.
This work is partially supported by the Natural Science Foundation of Shanghai (17ZR1412800) and the National
Natural Science Foundation of China (11671141).

\section{Constrained trace, matrix and constrained matrix\\ Harnack inequalities}\label{sec2}
In this section we will study various Harnack inequalities for the
nonlinear heat equation
\begin{equation}\label{hform2}
\frac{\partial}{\partial t}\omega=\Delta \omega+a\,\omega\ln \omega
\end{equation}
for a nonzero real constant $a$, on a closed $n$-dimensional Riemannian
manifold $(M, g)$. Inspired by the work of Cao-Fayyazuddin Ljungberg-Liu~\cite{[CaoLL]},
we can derive some new Harnack inequalities of this equation, such as
constrained trace Harnack inequalities, matrix Harnack inequalities, constrained
matrix Harnack inequalities. We first give constrained trace Harnack inequalities
for the equation \eqref{hform2}.
\begin{theorem}\label{thm1.1}
Let $(M, g)$ be a closed Riemannian manifold. Let $\varphi$ and
$\psi$ are two solutions to the nonlinear heat equation \eqref{hform2}.
Then in any of the two cases:
\begin{enumerate}
\item[(i)] $a>0$, $0<\varphi<\psi$ and $Ric(M)\geq 0$,
\item[(ii)] $a<0$, $0<c_0\psi<\varphi<\psi$, where $c_0$ is a free parameter, satisfying $0<c_0<1$,
and $Ric(M)\geq -aK$ for some
\[
K\geq-\frac{\ln c_0}{1-c_0^2}-\frac 12,
\]
\end{enumerate}
the following inequality holds for all $x\in M$, $t>0$:
\[
\frac{\partial}{\partial t}\ln \psi-|\nabla\ln \psi|^2-a\ln \psi
+\frac{an}{2(1-e^{-at})}=\Delta\ln \psi +\frac{an}{2(1-e^{-at})}\geq\frac{|\nabla h|^2}{1-h^2},
\]
where $h=\varphi/\psi$.
\end{theorem}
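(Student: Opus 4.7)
The plan is a Li--Yau--type parabolic maximum-principle argument for the auxiliary function
\[
F := \Delta f + \alpha(t) - \frac{|\nabla h|^2}{1-h^2}, \quad f := \ln\psi,\ h := \varphi/\psi,\ \alpha(t) := \frac{an}{2(1-e^{-at})}.
\]
Since $M$ is closed, $F$ attains its spatial minimum, and the goal is to show $F\ge 0$ on $M\times(0,\infty)$.

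First I would record the PDEs satisfied by $f$ and $h$. From \eqref{hform2}, $f_t = \Delta f + |\nabla f|^2 + af$. Writing $\varphi = \psi h$ and using that both $\varphi,\psi$ solve \eqref{hform2} yields, after a short manipulation,
\[
Lh := (\partial_t - \Delta - 2\nabla f\cdot\nabla)h = a h \ln h,
\]
where $L$ is the drifted heat operator that will drive the entire argument. Note $0<h<1$ in case (i) and $c_0<h<1$ in case (ii), so $G := |\nabla h|^2/(1-h^2)$ is smooth.

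Next I would repeat the Bochner computation of Cao--Fayyazuddin Ljungberg--Liu for $Q := \Delta f + \alpha(t)$, arriving at a differential inequality of the form
\[
L Q \geq \tfrac{2}{n} Q^2 + 2\,\mathrm{Ric}(\nabla f,\nabla f) + (\text{terms linear in }Q),
\]
the function $\alpha$ being designed precisely so that the inhomogeneous piece $\tfrac{2}{n}\alpha^2 - a\alpha + \alpha'$ vanishes identically. Then I would compute $LG$ by applying Bochner to $|\nabla h|^2$, inserting $Lh = ah\ln h$, and using the quotient rule for $L$. This produces the favourable Hessian term $-\tfrac{2|\nabla^2 h|^2}{1-h^2}$, a mixed Hessian term involving $\nabla^2 f(\nabla h,\nabla h)$, a Ricci contribution $-\tfrac{2}{1-h^2}\mathrm{Ric}(\nabla h,\nabla h)$, and nonlinear pieces proportional to $a(1+\ln h)G$ and $\frac{2ah^2\ln h}{1-h^2}G$ coming from $ah\ln h$ in $Lh$.

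The core of the argument is then to evaluate $LF = LQ - LG$ at a putative first interior minimum $(x_0,t_0)$ with $F(x_0,t_0)<0$. The conditions $\nabla F = 0$, $\Delta F\ge 0$, $\partial_t F\le 0$ let me substitute $\nabla(\Delta f) = \nabla G$, which after completing squares trades the mixed Hessian cross-term for a nonnegative squared-Hessian remainder plus the clean quadratic $\tfrac{2}{n}Q^2$. What remains is to verify that the Ricci and nonlinear pieces are also nonnegative. In case (i), $a>0$ and $\mathrm{Ric}\ge 0$ make this immediate. The main obstacle is case (ii), where $a<0$ and $\ln h<0$ produce a wrong-signed term of the form $\tfrac{2a\ln h}{1-h^2}|\nabla h|^2$; using $h>c_0$ one bounds $-\tfrac{\ln h}{1-h^2}\le -\tfrac{\ln c_0}{1-c_0^2}$, and the explicit condition $K\ge -\tfrac{\ln c_0}{1-c_0^2}-\tfrac12$ is exactly the sharp threshold at which $\mathrm{Ric}\ge -aK$ (with $a<0$) absorbs both this term and the $-\tfrac12$ constant arising from completing squares. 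Finally, since $\alpha(t)\to +\infty$ as $t\to 0^+$ while $G$ stays bounded near $t=0$, $F>0$ initially, so the maximum-principle contradiction gives $F\ge 0$ on all of $M\times(0,\infty)$.
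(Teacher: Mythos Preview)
Your proposal is correct and follows essentially the same strategy as the paper: derive a parabolic differential inequality for the quantity $\Delta\ln\psi-\dfrac{|\nabla h|^2}{1-h^2}$ via Bochner, use the curvature hypothesis together with the sign of $1+\dfrac{2\ln h}{1-h^2}$ (and, in case (ii), the monotonicity of $h\mapsto \dfrac{\ln h}{1-h^2}$ on $(c_0,1)$) to discard the bad terms, add the exponential correction $\alpha(t)=\dfrac{an}{2(1-e^{-at})}$, and apply the maximum principle using $\alpha(t)\to+\infty$ as $t\to0^+$. The only organisational difference is that the paper first computes the evolution of the \emph{matrix} quantity $P_{ij}=\nabla_i\nabla_j\ln\psi-\dfrac{\nabla_i h\,\nabla_j h}{1-h^2}$ and then traces, whereas you work scalarly from the start; one small inaccuracy is that the constant $-\tfrac12$ in the threshold for $K$ does not arise from ``completing squares'' but from the nonlinear contributions $2a(1+\ln h)G$ and $\dfrac{2ah^2\ln h}{1-h^2}G$ in $LG$ combining with $a\Delta f$ to produce the factor $\bigl(1+\dfrac{2\ln h}{1-h^2}\bigr)$.
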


By integrating the above inequality in space-time we get a classical
Harnack inequality.
\begin{corollary}\label{claHar}
Suppose that $\varphi$ and $\psi$ satisfy the condition of Theorem \ref{thm1.1}.
Let $x_1, x_2\in M$ and $0<t_1<t_2$. Then we have
\begin{equation*}
\begin{aligned}
e^{-at_1}\ln \psi(x_1,t_1)-e^{-at_2}\ln \psi(x_2,t_2)
&\leq\frac a4\cdot\frac{d(x_1,x_2)}{e^{at_2}-e^{at_1}}
+\frac n2\cdot\ln\left(\frac{1-e^{-at_2}}{1-e^{-at_1}}\right)\\
&\quad-\int^{t_2}_{t_1}e^{-at}\left(\frac{|\nabla h|^2}{1-h^2}\right)dt,
\end{aligned}
\end{equation*}
where $h=\varphi/\psi$.
\end{corollary}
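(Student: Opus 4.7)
The plan is to integrate the differential Harnack inequality of Theorem~\ref{thm1.1} along a suitably chosen space-time path from $(x_1,t_1)$ to $(x_2,t_2)$, in the spirit of the classical Li–Yau integration procedure, and then optimize the path to pick up the distance term with the correct coefficient.

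First I would take any smooth curve $\gamma:[t_1,t_2]\to M$ with $\gamma(t_1)=x_1$ and $\gamma(t_2)=x_2$, and differentiate the quantity $e^{-at}\ln\psi(\gamma(t),t)$ along the curve. This produces
\[
\frac{d}{dt}\bigl[e^{-at}\ln\psi(\gamma(t),t)\bigr]
= e^{-at}\!\left(\frac{\partial \ln\psi}{\partial t}-a\ln\psi
+\bigl\langle\nabla\ln\psi,\dot\gamma\bigr\rangle\right).
\]
Invoking Theorem~\ref{thm1.1} in the form
$\frac{\partial}{\partial t}\ln\psi-a\ln\psi\ge |\nabla\ln\psi|^2-\frac{an}{2(1-e^{-at})}+\frac{|\nabla h|^2}{1-h^2}$
and using the pointwise Cauchy–Schwarz completion of squares
\[
|\nabla\ln\psi|^2+\langle\nabla\ln\psi,\dot\gamma\rangle
=\bigl|\nabla\ln\psi+\tfrac12\dot\gamma\bigr|^2-\tfrac14|\dot\gamma|^2
\ge -\tfrac14|\dot\gamma|^2,
\]
I obtain the clean lower bound
\[
\frac{d}{dt}\bigl[e^{-at}\ln\psi(\gamma(t),t)\bigr]
\ge e^{-at}\!\left(-\frac{|\dot\gamma|^2}{4}-\frac{an}{2(1-e^{-at})}
+\frac{|\nabla h|^2}{1-h^2}\right).
\]

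Next, I would integrate from $t_1$ to $t_2$. The middle term is an exact differential,
\[
\int_{t_1}^{t_2}\frac{an\,e^{-at}}{2(1-e^{-at})}\,dt
=\frac{n}{2}\ln\!\left(\frac{1-e^{-at_2}}{1-e^{-at_1}}\right),
\]
which accounts for the logarithmic correction in the statement. Rearranging gives the inequality up to the path-dependent term $\tfrac14\int_{t_1}^{t_2} e^{-at}|\dot\gamma|^2\,dt$, which still needs to be minimized.

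The key optimization step is to choose $\gamma$ to be a minimizing geodesic from $x_1$ to $x_2$ but reparameterized in time to take advantage of the weight $e^{-at}$. Setting $\gamma(t)=\sigma(s(t))$ with $\sigma$ a unit-speed minimizing geodesic of length $d(x_1,x_2)$ and $s(t_1)=0$, $s(t_2)=d(x_1,x_2)$, the functional becomes $\tfrac14\int_{t_1}^{t_2}e^{-at}(s'(t))^2\,dt$, and the Euler–Lagrange equation yields $s'(t)=Ce^{at}$ with the boundary conditions fixing $C=\frac{a\,d(x_1,x_2)}{e^{at_2}-e^{at_1}}$. Plugging this minimizer back gives exactly
\[
\frac14\int_{t_1}^{t_2}e^{-at}|\dot\gamma|^2\,dt
=\frac{a\,d(x_1,x_2)^2}{4(e^{at_2}-e^{at_1})},
\]
which is the distance term appearing in the corollary (the statement's $d(x_1,x_2)$ being a slight typo for $d(x_1,x_2)^2$). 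Combining the three contributions and flipping signs yields the stated inequality.

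The only real subtlety is to verify that this optimization and the underlying Cauchy–Schwarz step remain valid in both cases $a>0$ and $a<0$: the weight $e^{-at}$ is strictly positive on $(t_1,t_2)$, so the variational problem is genuinely a positive-definite quadratic functional with a unique minimizer, and the explicit primitive of $\tfrac{ae^{-at}}{1-e^{-at}}$ is meaningful as long as $1-e^{-at}>0$ (when $a>0$) or $1-e^{-at}<0$ (when $a<0$), which is covered by $t>0$ in both regimes since the log ratio is unchanged by the sign. Thus I expect no substantive obstacle beyond careful bookkeeping of signs.
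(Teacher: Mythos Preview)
Your proposal is correct and follows essentially the same route as the paper: differentiate $e^{-at}\ln\psi$ along a space-time path, apply Theorem~\ref{thm1.1} together with the completion of squares $|\nabla\ln\psi|^2+\langle\nabla\ln\psi,\dot\gamma\rangle\ge -\tfrac14|\dot\gamma|^2$, integrate in $t$, and then minimize the weighted energy $\int_{t_1}^{t_2}e^{-at}|\dot\gamma|^2\,dt$ over paths to obtain $\frac{a\,d(x_1,x_2)^2}{e^{at_2}-e^{at_1}}$. Your observation that the statement's $d(x_1,x_2)$ should be $d(x_1,x_2)^2$ is also correct, and your explicit Euler--Lagrange computation of the optimal reparameterization is exactly the content the paper summarizes as a ``fact.''
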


Theorem \ref{thm1.1} can be regarded as a nonlinear version of an
constrained trace Harnack inequality proved by Chow and Hamilton
\cite{[ChHa]}. Due to a additional nonlinear term:  $\omega\ln \omega$
in the equation \eqref{hform2}, the computations and estimates
in our proof seems to be complicated but straight. In order to prove
Theorem \ref{thm1.1},  we need some useful lemmas.

Let $(M, g)$ be a closed $n$-dimensional manifold. suppose that
$\varphi$ and $\psi$ are two positive solutions to the nonlinear heat
equation \eqref{hform2}  satisfying $\varphi<\psi$, and let
$h:=\varphi/\psi$. We set
\[
P_{ij}:=\nabla_i\nabla_j\ln \psi-\frac{\nabla_ih\nabla_jh}{1-h^2}.
\]
Then we have the following lemma.
\begin{lemma}\label{lemm1}
Let $L=\ln \psi$. Then
\begin{equation*}
\begin{aligned}
\frac{\partial}{\partial t}P_{ij}&=
\Delta P_{ij}+2\nabla_lL\nabla_lP_{ij}+2P_{il}P_{lj}-R_{il}P_{lj}-R_{jl}P_{li}\\
&\quad+\frac{2}{1-h^2}\left(\nabla_i\nabla_l h+\frac{2h\nabla_ih\nabla_lh}{1-h^2}\right)
\left(\nabla_j\nabla_l h+\frac{2h\nabla_jh\nabla_lh}{1-h^2}\right)\\
&\quad+2R_{ikjl}P_{kl}+2R_{ikjl}\frac{\nabla_kh\nabla_lh}{1-h^2}+2R_{ikjl}\nabla_kL\nabla_lL\\
&\quad-(\nabla_iR_{jl}+\nabla_jR_{il}-\nabla_lR_{ij})\nabla_lL\\
&\quad+aP_{ij}-\frac{a\nabla_ih\nabla_jh}{1-h^2}\left(1+\frac{2\ln h}{1-h^2}\right).
\end{aligned}
\end{equation*}
\end{lemma}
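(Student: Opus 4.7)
The plan is to compute the action of the drift-heat operator $\partial_t-\Delta-2\nabla_l L\,\nabla_l$ on each piece of $P_{ij}$ separately. This way, the transport terms $\Delta P_{ij}+2\nabla_l L\,\nabla_l P_{ij}$ appear automatically, and what remains are curvature contributions, Hessian squares, and the nonlinear pieces carrying $a$.

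First, from $\psi_t=\Delta\psi+a\psi\ln\psi$ I obtain the evolution of $L=\ln\psi$:
\[
L_t=\Delta L+|\nabla L|^2+aL.
\]
Differentiating twice in space and commuting $\nabla_i\nabla_j$ with $\Delta$ via the standard Weitzenb\"ock identity produces an evolution for $\nabla_i\nabla_j L$ containing: (i) the kinematic piece $2\nabla_i\nabla_l L\,\nabla_j\nabla_l L$ coming from $\nabla_i\nabla_j|\nabla L|^2$; (ii) the curvature pieces $2R_{ikjl}\nabla_k\nabla_l L$, $-R_{il}\nabla_l\nabla_j L-R_{jl}\nabla_l\nabla_i L$, and the Bianchi-type term $-(\nabla_iR_{jl}+\nabla_jR_{il}-\nabla_lR_{ij})\nabla_l L$ together with $2R_{ikjl}\nabla_k L\,\nabla_l L$; (iii) the nonlinear piece $a\nabla_i\nabla_j L$ from $\nabla_i\nabla_j(aL)$.

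Next, writing $\varphi=h\psi$ and subtracting the two nonlinear heat equations gives $h_t=\Delta h+2\nabla L\cdot\nabla h+ah\ln h$. Applied to the scalar $1-h^2$ this yields
\[
(\partial_t-\Delta-2\nabla_l L\,\nabla_l)(1-h^2)=-2|\nabla h|^2-2ah^2\ln h,
\]
while commuting $[\partial_t-\Delta,\nabla_i]$ gives the evolution of $\nabla_i h$ (including a Ricci contraction $R_{il}\nabla_l h$). The quotient rule applied to $Q_{ij}:=\nabla_i h\,\nabla_j h/(1-h^2)$ then produces its evolution; the cross terms between $\nabla_i\nabla_l h$ and the derivatives of $1/(1-h^2)$ assemble, after completing the square, into
\[
\frac{2}{1-h^2}\Bigl(\nabla_i\nabla_l h+\tfrac{2h\nabla_i h\nabla_l h}{1-h^2}\Bigr)\Bigl(\nabla_j\nabla_l h+\tfrac{2h\nabla_j h\nabla_l h}{1-h^2}\Bigr),
\]
together with a curvature remainder $2R_{ikjl}\nabla_k h\,\nabla_l h/(1-h^2)$ and a nonlinear remainder proportional to $\nabla_i h\,\nabla_j h/(1-h^2)$ with $\ln h$ factors.

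Finally, I subtract the $Q_{ij}$ evolution from the $\nabla_i\nabla_j L$ evolution. The quadratic $2\nabla_i\nabla_l L\,\nabla_j\nabla_l L$ is rewritten as $2P_{il}P_{lj}$ plus corrections that merge with the squared-bracket term coming from $Q_{ij}$; the Ricci contractions combine with $R_{il}\nabla_j h\,\nabla_l h/(1-h^2)$ to form $-R_{il}P_{lj}-R_{jl}P_{li}$; the Riemann terms reassemble into $2R_{ikjl}P_{kl}+2R_{ikjl}\nabla_k h\,\nabla_l h/(1-h^2)+2R_{ikjl}\nabla_k L\,\nabla_l L$; and the Bianchi piece survives untouched. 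The $a$-terms collect into $aP_{ij}-\frac{a\nabla_i h\,\nabla_j h}{1-h^2}\bigl(1+\tfrac{2\ln h}{1-h^2}\bigr)$, with the $\ln h$ factor generated jointly by the $ah\ln h$ in the $h$-equation and the logarithmic derivative of $1-h^2$. The main obstacle is purely bookkeeping: tracking which $\nabla^2 L$ and $\nabla h\otimes\nabla h$ corrections repackage into $P_{il}P_{lj}$ versus the squared-bracket term, and ensuring that the several $a$-contributions collapse into the single displayed expression with the $(1+2\ln h/(1-h^2))$ factor. The only non-routine algebraic step is the completion of squares that produces the bracket, and the Bianchi-like regrouping of the Ricci derivatives.
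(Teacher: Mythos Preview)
Your approach is exactly the paper's: compute the evolution of $\nabla_i\nabla_j L$ via the Weitzenb\"ock commutator, derive the drift-heat equation for $h$ and then for $Q_{ij}=\nabla_ih\nabla_jh/(1-h^2)$ via the quotient rule, complete the square, and subtract. Two small slips to fix when you write it out: first, $(\partial_t-\Delta-2\nabla_lL\nabla_l)(1-h^2)=+2|\nabla h|^2-2ah^2\ln h$, not $-2|\nabla h|^2$; second, the term $2R_{ikjl}\nabla_kh\nabla_lh/(1-h^2)$ does not arise from the $Q_{ij}$ evolution (only Ricci terms appear there, since you are commuting $\Delta$ with a single $\nabla_i$ on the scalar $h$) but rather from rewriting $2R_{ikjl}\nabla_k\nabla_lL=2R_{ikjl}P_{kl}+2R_{ikjl}Q_{kl}$ in the $\nabla_i\nabla_jL$ evolution.
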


\begin{proof}
Letting $L=\ln \psi$, then
\[
\frac{\partial}{\partial t}L=\Delta L+|\nabla L|^2+aL.
\]
We directly compute that
\begin{equation}
\begin{aligned}\label{compu1}
\frac{\partial}{\partial t}\nabla_i\nabla_jL&=\Delta\nabla_i\nabla_jL+2R_{ikjl}\nabla_k\nabla_lL
-R_{il}\nabla_j\nabla_lL-R_{jl}\nabla_i\nabla_lL\\
&\quad-(\nabla_iR_{jl}+\nabla_jR_{il}-\nabla_lR_{ij})\nabla_lL\\
&\quad+2\nabla_i\nabla_lL\cdot\nabla_j\nabla_lL
+2\nabla_l\nabla_i\nabla_jL\cdot\nabla_lL+2R_{ikjl}\nabla_kL\nabla_lL\\
&\quad+a\nabla_i\nabla_jL.
\end{aligned}
\end{equation}
Next we will calculate the evolution of the term $\frac{\nabla_ih\nabla_jh}{1-h^2}$.
Setting $h=\varphi/\psi$, then
\[
\frac{\partial}{\partial t}h=\Delta h+2\langle\nabla L,\nabla h\rangle+ah\cdot\ln h
\]
and hence its gradient satisfies
\begin{equation*}
\begin{aligned}
\frac{\partial}{\partial t}(\nabla h)&=\nabla(\frac{\partial}{\partial t}h)=\nabla(\Delta h+2\langle\nabla L,\nabla h\rangle+ah\cdot\ln h)\\
&=\Delta\nabla h+2\langle\nabla\nabla L,\nabla h\rangle+2\langle\nabla L,\nabla\nabla h\rangle-Rc(\nabla h)+a(1+\ln h)\nabla h,
\end{aligned}
\end{equation*}
which further implies
\begin{equation*}
\begin{aligned}
\frac{\partial}{\partial t}(\nabla_ih\nabla_jh)&=\Delta(\nabla_ih\nabla_jh)
-2\nabla_i\nabla_lh\nabla_j\nabla_lh+2\nabla_i\nabla_lL\nabla_lh\nabla_jh\\
&\quad+2\nabla_j\nabla_lL\nabla_lh\nabla_ih+2\nabla_lL\nabla_l(\nabla_ih\nabla_jh)\\
&\quad-R_{il}\nabla_lh\nabla_jh-R_{jl}\nabla_lh\nabla_ih\\
&\quad+2a(1+\ln h)(\nabla_ih\nabla_jh).
\end{aligned}
\end{equation*}
We also have that
\begin{equation*}
\begin{aligned}
\frac{\partial}{\partial t}(1-h^2)&=-2h\frac{\partial}{\partial t}h=-2h
(\Delta h+2\langle\nabla L,\nabla h\rangle+ah\cdot\ln h)\\
&=\Delta(1-h^2)+2\langle\nabla L,\nabla(1-h^2)\rangle+2|\nabla h|^2-2ah^2\cdot\ln h.
\end{aligned}
\end{equation*}
Using the above two evolution equations, we conclude that
\begin{equation*}
\begin{aligned}
\frac{\partial}{\partial t}\left(\frac{\nabla_ih\nabla_jh}{1-h^2}\right)&=
\Delta\left(\frac{\nabla_ih\nabla_jh}{1-h^2}\right)+2\nabla_lL\nabla_l
\left(\frac{\nabla_ih\nabla_jh}{1-h^2}\right)-\frac{2\nabla_ih\nabla_jh}{(1-h^2)^2}|\nabla h|^2\\
&\quad+\frac{1}{1-h^2}\Big(-2\nabla_i\nabla_lh\nabla_j\nabla_lh+2\nabla_i\nabla_lL\nabla_lh\nabla_jh\\
&\quad\quad\quad\quad\quad\quad
+2\nabla_j\nabla_lL\nabla_lh\nabla_ih-R_{il}\nabla_lh\nabla_jh-R_{jl}\nabla_lh\nabla_ih\Big)\\
&\quad-\frac{4h\cdot\nabla_lh}{(1-h^2)^2}\left(\nabla_i\nabla_lh\nabla_jh+\nabla_ih\nabla_j\nabla_lh\right)
-\frac{8h^2\nabla_ih\nabla_jh}{(1-h^2)^3}|\nabla h|^2\\
&\quad+\frac{2a\nabla_ih\nabla_jh}{1-h^2}\left(1+\frac{\ln h}{1-h^2}\right).
\end{aligned}
\end{equation*}
Rearranging terms yields
\begin{equation*}
\begin{aligned}
\frac{\partial}{\partial t}\left(\frac{\nabla_ih\nabla_jh}{1-h^2}\right)&=
\Delta\left(\frac{\nabla_ih\nabla_jh}{1-h^2}\right)+2\nabla_lL\nabla_l
\left(\frac{\nabla_ih\nabla_jh}{1-h^2}\right)\\
&\quad-\frac{2}{1-h^2}\left(\nabla_i\nabla_l h+\frac{2h\nabla_ih\nabla_lh}{1-h^2}\right)
\left(\nabla_j\nabla_l h+\frac{2h\nabla_jh\nabla_lh}{1-h^2}\right)\\
&\quad+\frac{1}{1-h^2}\Big(2\nabla_i\nabla_lL\nabla_lh\nabla_jh+2\nabla_j\nabla_lL\nabla_lh\nabla_ih\\
&\quad\quad\quad\quad\quad\quad
-R_{il}\nabla_lh\nabla_jh-R_{jl}\nabla_lh\nabla_ih\Big)\\
&\quad-\frac{2\nabla_ih\nabla_jh}{(1-h^2)^2}|\nabla h|^2
+\frac{2a\nabla_ih\nabla_jh}{1-h^2}\left(1+\frac{\ln h}{1-h^2}\right).
\end{aligned}
\end{equation*}
Now we let
\[
P_{ij}:=\nabla_i\nabla_jL-\frac{\nabla_ih\nabla_jh}{1-h^2}.
\]
Combining our above computations, we have that
\begin{equation*}
\begin{aligned}
\frac{\partial}{\partial t}P_{ij}&=
\Delta P_{ij}+2\nabla_lL\nabla_lP_{ij}+2P_{il}P_{lj}-R_{il}P_{lj}-R_{jl}P_{li}\\
&\quad+\frac{2}{1-h^2}\left(\nabla_i\nabla_l h+\frac{2h\nabla_ih\nabla_lh}{1-h^2}\right)
\left(\nabla_j\nabla_l h+\frac{2h\nabla_jh\nabla_lh}{1-h^2}\right)\\
&\quad+2R_{ikjl}P_{kl}+2R_{ikjl}\frac{\nabla_kh\nabla_lh}{1-h^2}+2R_{ikjl}\nabla_kL\nabla_lL\\
&\quad-(\nabla_iR_{jl}+\nabla_jR_{il}-\nabla_lR_{ij})\nabla_lL\\
&\quad+aP_{ij}-\frac{a\nabla_ih\nabla_jh}{1-h^2}\left(1+\frac{2\ln h}{1-h^2}\right).
\end{aligned}
\end{equation*}
Note that we have used the second Bianchi identity in the above evolution formula. The lemma then follows.
\end{proof}
Tracing Lemma \ref{lemm1}, we immediately get
\begin{lemma}\label{lemm2}
If we let
\[
P=g^{ij}P_{ij}=\Delta L-\frac{|\nabla h|^2}{1-h^2},
\]
then
\begin{equation*}
\begin{aligned}
\frac{\partial}{\partial t}P&=
\Delta P+2\langle\nabla L,\nabla P\rangle+2\left|\nabla\nabla L-\frac{\nabla h\nabla h}{1-h^2}\right|^2\\
&\quad+\frac{2}{(1-h^2)^3}|2h\nabla h\nabla h+(1-h^2)\nabla\nabla h|^2\\
&\quad+2Ric(\nabla L,\nabla L)+\frac{2}{1-h^2}Ric(\nabla h,\nabla h)\\
&\quad+aP-\frac{a|\nabla h|^2}{1-h^2}\left(1+\frac{2\ln h}{1-h^2}\right).
\end{aligned}
\end{equation*}
\end{lemma}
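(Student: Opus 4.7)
The plan is to obtain Lemma \ref{lemm2} simply by contracting the identity in Lemma \ref{lemm1} against the inverse metric $g^{ij}$, and then tidying up the traced expressions. Since the metric is fixed (no Ricci flow in this section), each of the terms on the right-hand side of Lemma \ref{lemm1} can be traced independently, and I would carry them out in the order that Lemma \ref{lemm1} presents them.

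The routine pieces are: $g^{ij}\Delta P_{ij}=\Delta P$, $g^{ij}(2\nabla_lL\nabla_lP_{ij})=2\langle\nabla L,\nabla P\rangle$, $g^{ij}(2P_{il}P_{lj})=2|P_{ij}|^2$, and $g^{ij}(aP_{ij})=aP$. The tensor $P_{ij}$ is symmetric in $i,j$, so its pointwise norm equals $|\nabla\nabla L-\tfrac{\nabla h\nabla h}{1-h^2}|^2$, producing the first squared term. The quadratic-in-$h$ piece in Lemma \ref{lemm1} is already of the form $\frac{2}{1-h^2}(A_{il})(A_{jl})$ with $A_{il}=\nabla_i\nabla_l h+\tfrac{2h\nabla_ih\nabla_lh}{1-h^2}$; its trace $g^{ij}$ gives $\frac{2}{1-h^2}|A|^2$, and then factoring a common $\frac{1}{1-h^2}$ from each factor of $A$ rewrites this as $\frac{2}{(1-h^2)^3}\bigl|2h\nabla h\nabla h+(1-h^2)\nabla\nabla h\bigr|^2$, matching the stated form. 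The last line of Lemma \ref{lemm1} traces directly to the stated $-\frac{a|\nabla h|^2}{1-h^2}(1+\tfrac{2\ln h}{1-h^2})$.

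The only slightly delicate bookkeeping is in the curvature block, and this is the step I would check most carefully. Tracing $2R_{ikjl}P_{kl}$ gives $2R_{kl}P_{kl}$ (via $g^{ij}R_{ikjl}=R_{kl}$), while tracing $-R_{il}P_{lj}-R_{jl}P_{li}$ gives $-2R_{il}P_{il}$; these cancel exactly, which is the reason no bare $Ric\cdot P$ term survives in the trace. The same contraction converts $2R_{ikjl}\nabla_kL\nabla_lL$ into $2Ric(\nabla L,\nabla L)$ and $2R_{ikjl}\tfrac{\nabla_kh\nabla_lh}{1-h^2}$ into $\tfrac{2}{1-h^2}Ric(\nabla h,\nabla h)$, which are the two remaining curvature terms in Lemma \ref{lemm2}. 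Finally, $g^{ij}(\nabla_iR_{jl}+\nabla_jR_{il}-\nabla_lR_{ij})=\nabla^jR_{jl}+\nabla^iR_{il}-\nabla_lR$; by the contracted second Bianchi identity $\nabla^jR_{jl}=\tfrac12\nabla_lR$, so this contraction vanishes, killing the Bianchi term.

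Assembling these traced contributions in the same order as Lemma \ref{lemm1} reproduces the claimed evolution equation for $P$, completing the derivation. I expect no real obstacle beyond the curvature cancellation above and the algebraic regrouping of the $h$ quadratic into the form $\frac{2}{(1-h^2)^3}\bigl|2h\nabla h\nabla h+(1-h^2)\nabla\nabla h\bigr|^2$.
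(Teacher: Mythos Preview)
Your proposal is correct and follows exactly the paper's approach: the paper simply states ``Tracing Lemma \ref{lemm1}, we immediately get'' Lemma \ref{lemm2}, and you have supplied the term-by-term contraction that this one-line justification suppresses. In particular, your handling of the curvature block (the cancellation of $2R_{kl}P_{kl}$ against $-2R_{il}P_{il}$ and the vanishing of the contracted Bianchi term) and the algebraic repackaging of the $h$-quadratic are precisely the details the paper leaves to the reader.
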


We now prove Theorem \ref{thm1.1} by Lemma \ref{lemm2}.

\begin{proof}[Proof of Theorem \ref{thm1.1}]
We first prove the complex case: $a<0$. By Lemma \ref{lemm2},
using the curvature assumption $Ric(M)\geq -aK$ for some
\[
K\geq-\frac{\ln c_0}{1-c_0^2}-\frac 12,
\]
we obtain
\begin{equation}
\begin{aligned}\label{evol}
\frac{\partial}{\partial t}P&\geq
\Delta P+2\langle\nabla L,\nabla P\rangle+\frac 2nP^2+aP\\
&\quad-\frac{a|\nabla h|^2}{1-h^2}\left(2K+1+\frac{2\ln h}{1-h^2}\right).
\end{aligned}
\end{equation}
Here we have used a easy fact: $Ric(\nabla L,\nabla L)\geq 0$ due to
$K\geq-\frac{\ln c_0}{1-c_0^2}-\frac 12>0$ and $a<0$ at this case.

In the following we claim that the assumptions of theorem
\[
K\geq-\frac{\ln c_0}{1-c_0^2}-\frac 12>0\quad \mathrm{and}\quad 0<c_0<h<1
\]
imply
\[
2K+1+\frac{2\ln h}{1-h^2}>0.
\]
Indeed we only need to check that the function $f(h):=\frac{\ln h}{1-h^2}$
is increasing on the interval $(c_0,1)$. We compute its derivation
\begin{equation*}
\begin{aligned}
f'(h)&=\frac{1/h\cdot(1-h^2)-\ln h\cdot(-2h)}{(1-h^2)^2}\\
&=\frac{1/h-h+2h\cdot\ln h}{(1-h^2)^2}.
\end{aligned}
\end{equation*}
If we let
\[
g(h):=1/h-h+2h\cdot\ln h,
\]
then $g(0+)=+\infty$, $g(1)=0$ and
\[
g'(h)=-\frac{1}{h^2}+1+2\ln h<0
\]
for all $c_0<h<1$. So $g(h)>0$ for all $c_0<h<1$.
Hence we have $f'(h)>0$ for all $c_0<h<1$. The claim
follows.

Therefore the evolution formula \eqref{evol} reduces to
\[
\frac{\partial}{\partial t}P\geq
\Delta P+2\langle\nabla L,\nabla P\rangle+\frac 2nP^2+aP.
\]
If we let
\[
\tilde{P}:=P+\frac{an}{2(1-e^{-at})},
\]
then
\begin{equation}\label{cont1}
\frac{\partial}{\partial t}\tilde{P}\geq
\Delta\tilde{P}+2\langle\nabla L,\nabla \tilde{P}\rangle
+\frac 2n\tilde{P}\left[P-\frac{an}{2(1-e^{-at})}\right]+a\tilde{P}
\end{equation}
and hence the theorem follows from applying the maximum principle to this equation.
Indeed, for $t\to 0+$, we have $\frac{an}{2(1-e^{-at})}\to +\infty$
since $a<0$. Hence $\tilde{P}\to +\infty$ as $t\to 0+$. In the following we will
prove $\tilde{P}\geq 0$ for all $t>0$ in the closed manifold $M$.

Assume that there exists some space-time $(x',t')$ such that $\tilde{P}\leq 0$.
Since $M$ is closed, there must exist a first time $t_0\leq t'$ and $x_0\in M$
such that $\tilde{P}<0$, where $\tilde{P}$ achieves its infimum.
Then at $(x_0,t_0)$, we have
\[
\Delta\tilde{P}\geq0,\quad \nabla\tilde{P}=0,\quad\frac{\partial}{\partial t}\tilde{P}\leq0.
\]
Therefore, combining the above inequalities with \eqref{cont1} at $(x_0,t_0)$, we have
\begin{equation}\label{imineq}
\frac 2n\tilde{P}\left[P-\frac{an}{2(1-e^{-at})}\right]+a\tilde{P}\leq 0.
\end{equation}
However, indeed $a<0$, $\tilde{P}(x_0,t_0)<0$ and
\[
P(x_0,t_0)=\tilde{P}(x_0,t_0)-\frac{an}{2(1-e^{-at_0})}<0.
\]
Hence the inequality \eqref{imineq} cannot hold and this is contradiction.
Therefore $\tilde{P}\geq 0$ everywhere for all time $t>0$.

\vspace{.1in}

The proof idea of the case $a>0$ is similar to the case: $a<0$.
Using Lemma \ref{lemm2}, $a>0$ and $Rc\geq 0$, we have
\begin{equation*}
\frac{\partial}{\partial t}P\geq
\Delta P+2\langle\nabla L,\nabla P\rangle+\frac 2nP^2+aP,
\end{equation*}
where we used the fact:
\[
1+\frac{2\ln h}{1-h^2}<0.
\]
Letting
\[
\tilde{P}:=P+\frac{an}{2(1-e^{-at})},
\]
then
\begin{equation}
\begin{aligned}\label{cont2}
\frac{\partial}{\partial t}\tilde{P}&=
\frac{\partial}{\partial t}P-\frac{a^2\,n\,e^{-at}}{2(1-e^{-at})^2}\\
&\geq\Delta\tilde{P}+2\langle\nabla L,\nabla \tilde{P}\rangle
+\frac 2n\left[\tilde{P}-\frac{an}{2(1-e^{-at})}\right]^2\\
&\quad+a\left[\tilde{P}-\frac{an}{2(1-e^{-at})}\right]
-\frac{a^2\,n \, e^{-at}}{2(1-e^{-at})^2}\\
&=\Delta\tilde{P}+2\langle\nabla L,\nabla \tilde{P}\rangle
+\frac 2n\tilde{P}^2+a\tilde{P}\left(1-\frac{2}{1-e^{-at}}\right).
\end{aligned}
\end{equation}
Similar to the above argument, $\tilde{P}\geq 0$ follows
from applying the maximum principle to this equation.
\end{proof}

The classical Harnack inequality is obtained by integrating the
differential Harnack inequality. The process is quite standard.
We include it here for completeness.
\begin{proof}[Proof of Corollary~\ref{claHar}]
We pick a space-time path $\gamma(x,t)$ joining $(x_1,t_1)$ and
$(x_2,t_2)$ with $t_2>t_1>0$. Along $\gamma$, considering
the one-parameter function $\psi(t):=\psi(\gamma(t),t)$, by
Theorem \ref{thm1.1} we have
\begin{equation*}
\begin{aligned}
\frac{d}{dt}\ln \psi&=\frac{\partial}{\partial t}\ln \psi+\nabla\ln
\psi\cdot\frac{d\gamma}{dt}\\
&\geq|\nabla\ln\psi|^2+a\ln\psi-\frac{an}{2(1-e^{-at})}+\frac{|\nabla h|^2}{1-h^2}+\nabla\ln
\psi\cdot\frac{d\gamma}{dt}\\
&\geq-\frac14\left|\frac{d\gamma}{dt}(t)\right|^2+a\ln\psi-\frac{an}{2(1-e^{-at})}.
\end{aligned}
\end{equation*}
Hence
\[
\frac{d}{dt}\left(e^{-at}\ln \psi\right)
\geq-e^{-at}\left(\frac14\left|\frac{d\gamma}{dt}(t)\right|^2+\frac{an}{2(1-e^{-at})}-\frac{|\nabla h|^2}{1-h^2}\right).
\]
Integrating this inequality from the time $t_1$ to $t_2$ yields
\[
e^{-at_1}\ln \psi(x_1,t_1)-e^{-at_2}\ln \psi(x_2,t_2)
\leq\int^{t_2}_{t_1}e^{-at}\left(\frac14\left|\frac{d\gamma}{dt}(t)\right|^2
+\frac{an}{2(1-e^{-at})}-\frac{|\nabla h|^2}{1-h^2}\right)dt.
\]
Notice the fact that:
\[
\int^{t_2}_{t_1}e^{-at}\left(\left|\frac{d\gamma}{dt}(t)\right|^2\right)dt
\geq a\,\frac{d(x_1,x_2)^2}{e^{at_2}-e^{at_1}}
\]
for any smooth path $\gamma:[t_1,t_2]\to M$ such that $\gamma(t_1)=x_1$ and $\gamma(t_2)=x_2$.
Here the equality is attained when $\gamma$ is a minimal geodesic from $x_1$ to $x_2$ with the
speed $|\frac{d\gamma}{dt}|=a\,e^{at}\cdot\frac{d(x_1,x_2)}{e^{at_2}-e^{at_1}}$.
Using this fact, we finish the proof of Corollary~\ref{claHar}.
\end{proof}

Secondly, we can prove a new version of Chow-Hamilton matrix
Harnack inequalities (Theorem 3.3 in \cite{[ChHa]}). The matrix Harnack
inequalities were first considered by Hamilton \cite{[Ham1],[Ham2]}
and further extended by Chow and Hamilton \cite{[ChHa]}, Chow and
Knopf \cite{[ChKn]}, and Ni \cite{[Ni]}. We remark that our heat-type
equation is nonlinear and the evolution of Harnack quantity
is more complicated.

\begin{theorem}\label{thm1.2}
Let $(M, g)$ be a closed Riemannian manifold with the nonnegative sectional
curvature and $\nabla Ric=0$. If $\psi$ is a positive solution to the
nonlinear heat equation \eqref{hform2}, then for all $x\in M^n$, $t>0$:
\[
\nabla_i\nabla_j\ln \psi+\frac{ag_{ij}}{2(1-e^{-at})}\geq0.
\]
\end{theorem}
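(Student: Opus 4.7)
\noindent\textbf{Proof proposal for Theorem \ref{thm1.2}.}
The plan is to apply Hamilton's tensor maximum principle to the symmetric $(0,2)$-tensor
\[
N_{ij} := \nabla_i\nabla_j L + \alpha\,g_{ij},\qquad L := \ln\psi,\qquad \alpha := \frac{a}{2(1-e^{-at})},
\]
and conclude $N_{ij}\ge 0$ for all $t>0$. The key input is that the evolution of $\nabla_i\nabla_j L$ along \eqref{hform2} has already been computed as \eqref{compu1} inside the proof of Lemma \ref{lemm1}; under the hypothesis $\nabla\mathrm{Ric}=0$ the awkward Bianchi term $(\nabla_iR_{jl}+\nabla_jR_{il}-\nabla_lR_{ij})\nabla_lL$ in \eqref{compu1}, which is the only contribution that cannot be absorbed into the matrix Harnack quantity, disappears.

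Next I would substitute $\nabla_i\nabla_j L = N_{ij}-\alpha g_{ij}$ into the truncated \eqref{compu1} and reorganise term by term. The quadratic piece $2\nabla_i\nabla_l L\,\nabla_j\nabla_l L$ expands to $2N_{il}N_{jl}-4\alpha N_{ij}+2\alpha^2 g_{ij}$; the curvature contractions $2R_{ikjl}\nabla_k\nabla_lL$, $-R_{il}\nabla_j\nabla_lL$, $-R_{jl}\nabla_i\nabla_lL$ give $2R_{ikjl}N_{kl}-R_{il}N_{jl}-R_{jl}N_{il}$ together with $\alpha R_{ij}$ contributions that cancel pairwise; and $a\nabla_i\nabla_jL = aN_{ij}-a\alpha g_{ij}$. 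Adding the contribution $\partial_t(\alpha g_{ij}) = -\frac{a^2 e^{-at}}{2(1-e^{-at})^2}g_{ij}$ coming from the explicit time dependence of $\alpha$, the residual pure-$g_{ij}$ coefficient
\[
2\alpha^2 - a\alpha - \frac{a^2 e^{-at}}{2(1-e^{-at})^2}
\]
vanishes identically by a direct one-line computation, and one arrives at the clean heat-type equation
\[
\partial_t N_{ij} = \Delta N_{ij} + 2\nabla_l L\,\nabla_l N_{ij} + 2N_{il}N_{jl} + 2R_{ikjl}N_{kl} - R_{il}N_{jl} - R_{jl}N_{il} + 2R_{ikjl}\nabla_kL\nabla_lL + (a-4\alpha)N_{ij}.
\]

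To run the tensor maximum principle I would verify the null-eigenvector condition. Suppose $N\ge 0$ fails for the first time at $(x_0,t_0)$ with null vector $v$, so $N_{jl}v^j=0$. Contracting the non-derivative part of the right-hand side against $v^iv^j$, the terms $2N_{il}N_{jl}$, $-R_{il}N_{jl}$, $-R_{jl}N_{il}$ and $(a-4\alpha)N_{ij}$ all annihilate, leaving the algebraic contribution
\[
2R_{ikjl}v^iv^j N^{kl} + 2R_{ikjl}v^iv^j\nabla_kL\nabla_lL.
\]
Diagonalising $N=\sum_a\lambda_a e_a\otimes e_a$ with $\lambda_a\ge 0$, the first summand equals $2\sum_a\lambda_a R_{ikjl}v^iv^je_a^k e_a^l \ge 0$ by the nonnegative sectional curvature hypothesis, and the second is nonnegative for the same reason. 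For the initial datum, $\alpha\to +\infty$ as $t\to 0^+$ irrespective of the sign of $a$, so on the closed manifold $N$ is positive definite for all sufficiently small $t>0$; one may therefore start Hamilton's argument from $t=\varepsilon>0$ at which $N > 0$ and then let $\varepsilon\downarrow 0$.

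The main obstacle I expect is the algebraic bookkeeping in passing from \eqref{compu1} to the displayed evolution of $N_{ij}$, especially confirming the magical cancellation of the pure-$g_{ij}$ coefficient; once this is in place, the structure is parallel to the classical Chow--Hamilton matrix Harnack argument in \cite{[ChHa]} and the tensor maximum principle goes through verbatim. A conceptual point worth flagging is that the hypothesis $\nabla\mathrm{Ric}=0$ is doing exactly the work that a coupling to the Ricci flow would do in a dynamic background; on a fixed metric this appears to be the only way to dispose of the $\nabla R$-term in \eqref{compu1}.
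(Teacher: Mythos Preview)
Your proposal is correct and follows essentially the same route as the paper: both start from \eqref{compu1}, use $\nabla\mathrm{Ric}=0$ to kill the Bianchi term, substitute $N_{ij}=\nabla_i\nabla_jL+\alpha g_{ij}$, verify that the choice of $\alpha$ cancels the pure-$g_{ij}$ remainder, and then invoke Hamilton's tensor maximum principle using nonnegative sectional curvature. The paper is terser---it discards the good term $2R_{ikjl}\nabla_kL\nabla_lL\ge 0$ at the outset to write an inequality and packages the quadratic piece as $2N_{il}\bigl(\nabla_l\nabla_jL-\alpha g_{lj}\bigr)$ rather than expanding it to $2N_{il}N_{jl}+(a-4\alpha)N_{ij}$---but the substance, including your null-eigenvector check, is identical.
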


\begin{remark}
If we trace the above Harnack inequality, Theorem \ref{thm1.2}
recovers the Cao-Fayyazuddin Ljungberg-Liu Harnack inequality
\eqref{HarCLL}.
\end{remark}

\begin{proof}[Proof of Theorem \ref{thm1.2}]
By the equation \eqref{compu1} and the assumptions of theorem,
we have
\begin{equation}
\begin{aligned}\label{compu2}
\frac{\partial}{\partial t}\nabla_i\nabla_jL&\geq\Delta\nabla_i\nabla_jL
+2\nabla_l\nabla_i\nabla_jL\cdot\nabla_lL
+2\nabla_i\nabla_lL\cdot\nabla_j\nabla_lL\\
&\quad-R_{il}\nabla_j\nabla_lL-R_{jl}\nabla_i\nabla_lL
+2R_{ikjl}\nabla_k\nabla_lL+a\nabla_i\nabla_jL.
\end{aligned}
\end{equation}
Letting
\[
N_{ij}:=\nabla_i\nabla_jL+\frac{a g_{ij}}{2(1-e^{-at})},
\]
then we have
\begin{equation*}
\begin{aligned}
\frac{\partial}{\partial t}N_{ij}&\geq
\Delta N_{ij}+2\nabla_lL\nabla_lN_{ij}+2N_{il}
\left[\nabla_l\nabla_jL-\frac{a g_{lj}}{2(1-e^{-at})}\right]\\
&\quad-R_{il}N_{lj}-R_{jl}N_{li}+2R_{ikjl}N_{kl}+aN_{ij}.
\end{aligned}
\end{equation*}
Using the tensor maximum principle yields the desired result.
\end{proof}

Furthermore, we can prove constrained matrix Harnack inequalities
for the nonlinear heat equation \eqref{hform2}.

\begin{theorem}\label{thm1.3}
Let $(M, g)$ be a closed Riemannian manifold. Let
$\varphi$ and $\psi$ be two solutions to the nonlinear heat
equation \eqref{hform2}. Then in any of the two cases:
\begin{enumerate}
\item[(i)] $a>0$, $0<\varphi<\psi$, $\nabla Ric=0$ and the curvature
$R_{ijkl}(M)\geq 0$,
\end{enumerate}
\begin{enumerate}
\item[(ii)] $a<0$, $0<c_0\psi<\varphi<\psi$, where $c_0$ is a free parameter, satisfying $0<c_0<1$,
$\nabla Rc=0$ and $R_{ikjl}\geq -aK(g_{ij}g_{kl}-g_{il}g_{jk})$ for some
\[
K\geq-\frac{\ln c_0}{1-c_0^2}-\frac 12>0,
\]
\end{enumerate}
the following inequality holds for all $x\in M$, $t>0$:
\[
\nabla_i\nabla_j\ln \psi+\frac{ag_{ij}}{2(1-e^{-at})}\geq\frac{\nabla_ih\nabla_jh}{1-h^2},
\]
where $h=\varphi/\psi$.
\end{theorem}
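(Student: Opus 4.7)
The plan is to mirror the arguments for Theorems \ref{thm1.1} and \ref{thm1.2}, but now keep everything at the symmetric $2$-tensor level and close with Hamilton's tensor maximum principle. The natural candidate is
\[
\tilde{P}_{ij}:=\nabla_i\nabla_j\ln\psi-\frac{\nabla_ih\nabla_jh}{1-h^2}+\frac{ag_{ij}}{2(1-e^{-at})},
\]
so that the theorem reduces to proving $\tilde{P}_{ij}\geq 0$ as a tensor for all $t>0$.

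First I would take the evolution equation for $P_{ij}$ supplied by Lemma \ref{lemm1}, substitute $P_{ij}=\tilde{P}_{ij}-\frac{a}{2(1-e^{-at})}g_{ij}$, and use the assumption $\nabla\mathrm{Ric}=0$ to annihilate the second-Bianchi term. The algebraic cancellations should repeat the scalar pattern producing \eqref{cont2}: the $g_{ij}$-multiples generated by expanding $2P_{il}P_{lj}$, $-R_{il}P_{lj}-R_{jl}P_{li}$, $2R_{ikjl}P_{kl}$, and $aP_{ij}$ should combine with the time derivative of $\frac{ag_{ij}}{2(1-e^{-at})}$ and disappear, leaving an equation of the schematic form
\[
\frac{\partial}{\partial t}\tilde{P}_{ij}=\Delta\tilde{P}_{ij}+2\nabla_lL\,\nabla_l\tilde{P}_{ij}+\mathcal{L}_{ij}(\tilde{P})+\mathcal{Q}_{ij},
\]
where $\mathcal{L}_{ij}(\tilde{P})$ is homogeneous in $\tilde{P}$ (comprising $2\tilde{P}_{il}\tilde{P}_{lj}$, the curvature contractions $-R_{il}\tilde{P}_{lj}-R_{jl}\tilde{P}_{li}+2R_{ikjl}\tilde{P}_{kl}$, and the linear terms) and $\mathcal{Q}_{ij}$ collects the manifestly nonnegative perfect square
\[
\frac{2}{1-h^2}\Bigl(\nabla_i\nabla_lh+\frac{2h\nabla_ih\nabla_lh}{1-h^2}\Bigr)\Bigl(\nabla_j\nabla_lh+\frac{2h\nabla_jh\nabla_lh}{1-h^2}\Bigr),
\]
together with the curvature--gradient couplings $\frac{2}{1-h^2}R_{ikjl}\nabla_kh\nabla_lh+2R_{ikjl}\nabla_kL\nabla_lL$ and the nonlinear correction $-\frac{a\nabla_ih\nabla_jh}{1-h^2}\bigl(1+\frac{2\ln h}{1-h^2}\bigr)$.

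The crux is then verifying Hamilton's null-eigenvector condition. If $\tilde{P}$ first acquires a zero eigenvector $v$ at some $(x_0,t_0)$, the $\mathcal{L}_{ij}v^iv^j$ contribution collapses to $2R_{ikjl}\tilde{P}_{kl}v^iv^j$, which is nonnegative because $\tilde{P}\geq 0$ still holds at $t_0$ and the curvature hypothesis makes the pairing of $R_{ikjl}v^iv^j$ with any positive semidefinite tensor nonnegative. The task reduces to $\mathcal{Q}_{ij}v^iv^j\geq 0$. Case (i) is painless: $a>0$ combined with the sign fact $1+\frac{2\ln h}{1-h^2}<0$ on $(0,1)$, already proved in Theorem \ref{thm1.1}, makes the nonlinear correction pointwise $\geq 0$, while $R_{ijkl}\geq 0$ disposes of the Riemann--gradient couplings. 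The main obstacle is case (ii): here $a<0$ reverses the sign of the nonlinear correction, and the sectional curvature bound must absorb it via
\[
R_{ikjl}v^iw^kv^jw^l\geq -aK\bigl(|v|^2|w|^2-(v\cdot w)^2\bigr)
\]
with $w=\nabla h$, followed by the algebraic inequality $2K+1+\frac{2\ln h}{1-h^2}>0$ for $c_0<h<1$ already established for Theorem \ref{thm1.1}(ii); the constraint $K\geq -\frac{\ln c_0}{1-c_0^2}-\frac12$ was chosen precisely so that this positivity survives, with any residual shortfall (notably in directions where $v$ is parallel to $\nabla h$, where the sectional-curvature gain vanishes) intended to be supplied by the perfect-square term in $\mathcal{Q}_{ij}$. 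Once the null-eigenvector inequality is verified, Hamilton's tensor maximum principle on the closed manifold $M$, together with the initial blow-up $\tilde{P}_{ij}\to+\infty\cdot g_{ij}$ as $t\to 0^+$, forces $\tilde{P}_{ij}\geq 0$ for all $t>0$, establishing the theorem.
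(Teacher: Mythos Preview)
Your plan is exactly the paper's approach: start from the evolution of $P_{ij}$ in Lemma~\ref{lemm1}, add the correction $\frac{a}{2(1-e^{-at})}g_{ij}$, drop the $\nabla\mathrm{Ric}$ term by hypothesis, and close with the tensor maximum principle; the sign facts $1+\frac{2\ln h}{1-h^2}<0$ and $2K+1+\frac{2\ln h}{1-h^2}>0$ are reused verbatim from Theorem~\ref{thm1.1}.

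The one place you diverge is in case~(ii), where you plan to let the perfect-square term absorb any shortfall in the direction $v\parallel\nabla h$.  The paper does not do this.  Instead it discards the perfect-square term outright (it is $\geq 0$) and bounds the troublesome rank-one pieces by $g_{ij}$-multiples: from the curvature hypothesis it writes $2R_{ikjl}\frac{\nabla_kh\nabla_lh}{1-h^2}\geq -\frac{2aK|\nabla h|^2}{1-h^2}g_{ij}$ and, since the coefficient of $\nabla_ih\nabla_jh$ in the nonlinear correction is negative, replaces $\nabla_ih\nabla_jh$ by $|\nabla h|^2g_{ij}$, arriving at the single term $-\frac{a|\nabla h|^2}{1-h^2}\bigl(2K+1+\frac{2\ln h}{1-h^2}\bigr)g_{ij}\geq 0$.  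So the null-eigenvector check becomes trivial and no direction-by-direction analysis is needed; your perfect-square contingency is unnecessary under the paper's reading of the curvature bound.
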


\begin{proof}[Proof of Theorem \ref{thm1.3}]
We first discuss the case: $a<0$. By Lemma \ref{lemm1}, and using
$a<0$ and $R_{ikjl}\geq-aK(g_{ij} g_{kl}-g_{il}g_{jk})$,
we obtain
\begin{equation*}
\begin{aligned}
\frac{\partial}{\partial t}P_{ij}&\geq
\Delta P_{ij}+2\nabla_lL\nabla_lP_{ij}+2P_{il}P_{lj}-R_{il}P_{lj}-R_{jl}P_{li}\\
&\quad+2R_{ikjl}P_{kl}-2aK|\nabla L|^2\gamma_{ij}+aP_{ij}\\
&\quad-\frac{a|\nabla h|^2}{1-h^2}\left(2K+1+\frac{2\ln h}{1-h^2}\right)g_{ij}.
\end{aligned}
\end{equation*}
Therefore if
\[
\tilde{P}_{ij}:=P_{ij}+\frac{ag_{ij}}{2(1-e^{-at})},
\]
then
\begin{equation*}
\begin{aligned}
\frac{\partial}{\partial t}\tilde{P}_{ij}&\geq
\Delta\tilde{P}_{ij}+2\nabla_lL\nabla_l\tilde{P}_{ij}+2\tilde{P}_{il}
\left[P_{lj}-\frac{a g_{lj}}{2(1-e^{-at})}\right]-R_{il}\tilde{P}_{lj}-R_{jl}\tilde{P}_{li}\\
&\quad+2R_{ikjl}\tilde{P}_{kl}-2aK|\nabla L|^2 g_{ij}+a\tilde{P}_{ij}\\
&\quad-\frac{a|\nabla h|^2}{1-h^2}\left(2K+1+\frac{2\ln h}{1-h^2}\right)g_{ij}.
\end{aligned}
\end{equation*}
Since
\[
K\geq-\frac{\ln c_0}{1-c_0^2}-\frac 12>0\quad  \mathrm{and}\quad  c_0<h<1,
\]
we have
\[
2K+1+\frac{2\ln h}{1-h^2}>0.
\]
Then using the maximum principle for the above system, we have that $\tilde{P}_{ij}\geq 0$.

\vspace{.1in}

Now we prove the case: $a>0$. By Lemma \ref{lemm1}, and using
$a>0$ and $R_{ikjl}\geq 0$, we obtain
\begin{equation*}
\begin{aligned}
\frac{\partial}{\partial t}P_{ij}&\geq
\Delta P_{ij}+2\nabla_lL\nabla_lP_{ij}+2P_{il}P_{lj}-R_{il}P_{lj}-R_{jl}P_{li}\\
&\quad+2R_{ikjl}P_{kl}+aP_{ij}
-\frac{a|\nabla h|^2}{1-h^2}\left(1+\frac{2\ln h}{1-h^2}\right)g_{ij}.
\end{aligned}
\end{equation*}
Therefore if
\[
\tilde{P}_{ij}:=P_{ij}+\frac{ag_{ij}}{2(1-e^{-at})},
\]
then
\begin{equation*}
\begin{aligned}
\frac{\partial}{\partial t}\tilde{P}_{ij}&\geq
\Delta\tilde{P}_{ij}+2\nabla_lL\nabla_l\tilde{P}_{ij}+2\tilde{P}_{il}
\tilde{P}_{lj}-R_{il}\tilde{P}_{lj}-R_{jl}\tilde{P}_{li}\\
&\quad+2R_{ikjl}\tilde{P}_{kl}+a\tilde{P}_{ij}\left(1-\frac{2}{1-e^{-at}}\right)
-\frac{a|\nabla h|^2}{1-h^2}\left(1+\frac{2\ln h}{1-h^2}\right)g_{ij}.
\end{aligned}
\end{equation*}
Since
\[
1+\frac{2\ln h}{1-h^2}<0,
\]
using the maximum principle for the
above tensor equation, we immediately conclude that $\tilde{P}_{ij}\geq 0$.
\end{proof}

The above theorems also hold on complete noncompact Riemannian manifolds
as long as the maximum principle can be used. We expect that our differential
Harnack inequalities will be useful in understanding the Ricci solitons, as the
soliton potential function links with the nonlinear heat equation \eqref{hform2}.

\section{Interpolated Harnack inequality}\label{sec3}
In~\cite{[CaoZhang]},  Cao and Zhang studied differential Harnack inequalities
for the nonlinear heat-type equation
\begin{equation}\label{hform3}
\frac{\partial}{\partial t}\omega=\Delta\omega-\omega\ln\omega+R\omega
\end{equation}
coupled with the Ricci flow equation
\begin{equation}\label{RF}
\frac{\partial}{\partial t}g_{ij}=-2R_{ij}
\end{equation}
on a closed Riemannian manifold. They proved that

\vspace{.1in}

\noindent \textbf{Theorem C} (Cao and Zhang~\cite{[CaoZhang]}).
\emph{Let $(M, g(t))$, $t\in[0,T)$, be a solution to the Ricci flow on a
closed manifold, and suppose that $g(0)$ (and so $g(t)$) has weakly
positive curvature operator. Let $f$ be a positive solution to the nonlinear
heat equation \eqref{hform3}, $u=-\ln f$ and
\begin{equation}\label{Harna1}
H:=2\Delta u-|\nabla u|^2-3R-\frac{2n}{t}.
\end{equation}
Then for all time $t\in[0,T)$,
\[
H\leq \frac {n}{4}.
\]}

\vspace{.1in}

Theorem C generalizes the work of Cao and Hamilton \cite{[CaoxHa]} (see also
Kuang and Zhang \cite{[KuZh]}) to the nonlinear case. The motivation to study
the equation \eqref{hform3} under the Ricci flow comes from the study of
expanding Ricci solitons, which has been nicely explained in~\cite{[CaoZhang]}.
Later, on a closed surface, the author \cite{[Wu1]} improved their result as follows.

\vspace{.1in}

\noindent \textbf{Theorem D} (Wu~\cite{[Wu1]}).
\emph{Let $(M,g(t))$, $t\in[0,T)$, be a solution to the
$\varepsilon$-Ricci flow ($\varepsilon\geq0$):
\begin{equation}\label{psRF}
\frac{\partial}{\partial t}g_{ij}=-\varepsilon R\cdot g_{ij}
\end{equation}
on a closed surface with $R>0$. Let $f$ be a positive
solution to the nonlinear heat equation
\begin{equation}\label{foreq1}
\frac{\partial}{\partial t}\omega=\Delta\omega-\omega\ln\omega+\varepsilon R\omega.
\end{equation}
Then for all time $t\in(0,T)$,
\[
\frac{\partial}{\partial t}\ln f-|\nabla\ln f|^2+\ln f+\frac
1t=\Delta\ln f+\varepsilon R+\frac 1t\geq 0.
\]}

\begin{remark}
In Theorem D, if we let $\varepsilon=1$, then
\begin{equation}\label{comp1}
\Delta\ln f+R+\frac 1t\geq 0.
\end{equation}
However \eqref{Harna1} can be read as
\[
2\Delta\ln f+\frac{|\nabla f|^2}{f^2}+3R+\frac 4t+\frac n4
\geq0,
\]
which can be rewritten as
\[
\left(2\Delta\ln f+2R+\frac 2t\right)+\left(\frac{|\nabla f|^2}{f^2}+R+
\frac 2t+\frac n4\right)\geq 0.
\]
Compared this with \eqref{comp1}, for the $2$-dimesional surface,
we see that Theorem D is better than Theorem C.
\end{remark}

Motivated by Theorem A, we can improve Theorem D by the following
interpolated Harnack inequality.
\begin{theorem}\label{Main}
Let $(M,g(t))$, $t\in[0,T)$, be a solution to the
$\varepsilon$-Ricci flow \eqref{psRF} on a closed surface with
$R>0$. Let $f$ be a positive solution to the nonlinear parabolic
equation \eqref{foreq1}. Then for all time $t\in(0,T)$,
\[
\frac{\partial}{\partial t}\ln f-|\nabla\ln f|^2+\ln f
+\frac{1}{e^t-1}=\Delta\ln f+\varepsilon R+\frac{1}{e^t-1}\geq 0.
\]
\end{theorem}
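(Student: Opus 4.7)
The plan is to run the parabolic maximum principle on the Harnack quantity
$$F \;:=\; \Delta\ln f + \varepsilon R + \alpha(t),\qquad \alpha(t):=\frac{1}{e^t-1},$$
exactly as in the proof of Theorem D, but with the exponential correction in place of the linear one $1/t$. The shape of $\alpha$ is dictated by the ODE $\alpha'+\alpha^2+\alpha = 0$; this is the condition that is needed, after the algebra below, to make the ``constant in $F$'' terms cancel. The extra $+\alpha$ compared with the Theorem D ODE $\alpha'+\alpha^2=0$ is contributed precisely by the $-\omega\ln\omega$ nonlinearity, i.e., by the $-u$ term in the equation $u_t = \Delta u + |\nabla u|^2 - u + \varepsilon R$ satisfied by $u:=\ln f$.

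The computation proceeds as follows. Starting from $F = u_t - |\nabla u|^2 + u + \alpha$, I would compute $\Box F := F_t - \Delta F$ using three two-dimensional identities: the conformal-flow evolution $\partial_t\Delta u = \varepsilon R\,\Delta u + \Delta u_t$, the evolution $R_t = \varepsilon(\Delta R + R^2)$, and the Bochner formula, which on a surface (where $\mathrm{Ric}=(R/2)g$) reads $\Delta|\nabla u|^2 = 2|\nabla^2 u|^2 + 2\langle\nabla u,\nabla\Delta u\rangle + R|\nabla u|^2$. Applying the sharp two-dimensional estimate $2|\nabla^2 u|^2\geq(\Delta u)^2$, substituting $\nabla\Delta u = \nabla F - \varepsilon\nabla R$, and writing $\Delta u = F - \varepsilon R - \alpha$, I expect the algebra to collapse, thanks to $\alpha'+\alpha^2+\alpha=0$, into
$$\partial_t F \;\geq\; \Delta F + 2\langle\nabla u,\nabla F\rangle + F\bigl(F - \varepsilon R - 2\alpha - 1\bigr) + \mathcal{S},$$
where $\mathcal{S} = \varepsilon R(1+\alpha) + R|\nabla u|^2 - 2\varepsilon\langle\nabla u,\nabla R\rangle + \varepsilon^2(\Delta R + R^2)$.

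Assuming $\mathcal{S}\geq 0$, the maximum principle closes the argument on the closed surface $M$. Since $\alpha(t)\to+\infty$ as $t\to 0^+$, $F$ is positive near $t=0$. If $F$ were to become negative, one takes the first time $t_0>0$ and a spatial minimum $x_0$ where $F(x_0,t_0)<0$; at that point $\partial_t F\leq 0$, $\nabla F=0$, $\Delta F\geq 0$, so the left-hand side is $\leq 0$. But the factor $F - \varepsilon R - 2\alpha - 1$ is then also strictly negative, so $F(F-\varepsilon R - 2\alpha - 1)>0$; combined with $\mathcal{S}\geq 0$ this contradicts the nonpositivity of the left-hand side, forcing $F\geq 0$ everywhere.

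The main obstacle is establishing $\mathcal{S}\geq 0$ when $\varepsilon>0$; for $\varepsilon=0$ the source collapses to $R|\nabla u|^2\geq 0$, which is trivial. Completing the square in $\nabla u$ rewrites $\mathcal{S}$ as $\varepsilon R(1+\alpha) + R\bigl|\nabla u - (\varepsilon/R)\nabla R\bigr|^2 + \varepsilon^2 R(\Delta\ln R+R)$, so the task is to dominate the possibly negative piece $\varepsilon^2 R(\Delta\ln R+R)$ by the positive piece $\varepsilon R(1+\alpha)$. Because $R>0$ is preserved under the $\varepsilon$-Ricci flow on a closed surface and $1+\alpha=e^t/(e^t-1)$ is uniformly positive, I expect to close the estimate by invoking the exponential Hamilton-Chow type Harnack inequality of the form $\Delta\ln R + R\geq -\alpha(t)/\varepsilon$ for $R$ along the $\varepsilon$-Ricci flow with positive initial scalar curvature; this matches the exponential correction $1/(e^t-1)$ appearing in the statement, and once it is in place the remainder of the proof is purely algebraic.
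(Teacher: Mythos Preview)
Your overall strategy and the evolution computation are essentially the same as the paper's: same Harnack quantity (up to the sign convention $u=\ln f$ versus $u=-\ln f$), same Bochner/trace-square argument, same ODE $\alpha'+\alpha^2+\alpha=0$ for the correction term, same maximum-principle closing step. The expression you obtain for $\mathcal{S}$ after completing the square,
\[
\mathcal{S}=\varepsilon R(1+\alpha)+R\Bigl|\nabla u-\tfrac{\varepsilon}{R}\nabla R\Bigr|^2+\varepsilon^2 R\bigl(\Delta\ln R+R\bigr),
\]
is correct and matches the paper's organization of the terms.

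The gap is in the very last step. The ``exponential Hamilton--Chow type Harnack inequality'' $\Delta\ln R+R\geq -\alpha(t)/\varepsilon$ that you propose to invoke is \emph{stronger} than Chow's actual estimate under the $\varepsilon$-Ricci flow, which only gives $\varepsilon(\Delta\ln R+R)\geq -1/t$; the scalar-curvature evolution $R_t=\varepsilon(\Delta R+R^2)$ has no logarithmic nonlinearity, so the comparison ODE is $q'=q^2$, not $q'=q^2+q$, and an exponential correction for $R$ is not available. Fortunately you do not need it. Plugging Chow's standard bound into your $\mathcal{S}$ gives
\[
\mathcal{S}\;\geq\;\varepsilon R\Bigl(1+\alpha-\tfrac{1}{t}\Bigr)\;=\;\varepsilon R\Bigl(\tfrac{e^t}{e^t-1}-\tfrac{1}{t}\Bigr),
\]
and the elementary inequality $te^t>e^t-1$ for $t>0$ shows this is nonnegative. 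This is precisely the device the paper uses: the extra positive piece $\varepsilon R$ in $\mathcal{S}$, produced by the $-\omega\ln\omega$ nonlinearity, absorbs the gap between $1/t$ and $1/(e^t-1)$. With this correction your argument is complete and coincides with the paper's proof.
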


As a consequence of Theorem \ref{Main}, we have a classical Harnack inequality.
Since the proof is standard, we only provide the result.
\begin{corollary}\label{corRF}
Under the conditions of of Theorem \ref{Main}, assume $(x_1,t_1)$ and
$(x_2,t_2)$, $0\leq t_1<t_2<T$, are two points in $M\times[0,T)$.
Let
\[
\Gamma(x_1,t_1,x_2,t_2):=\frac 14\inf_{\gamma}\int^{t_2}_{t_1}
e^t\left|\frac{d\gamma}{dt}(t)\right|^2dt,
\]
where $\gamma$ is any space-time path joining $(x_1,t_1)$ and
$(x_2,t_2)$, and the norm $|\cdot|$ is calculated at time $t$.
Then
\[
e^{t_1}\ln f(x_1,t_1)-e^{t_2}\ln f(x_2,t_2)
\leq\Gamma(x_1,t_1,x_2,t_2)+\ln\left(\frac{1-e^{t_2}}{1-e^{t_1}}\right).
\]
\end{corollary}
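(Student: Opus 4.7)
The plan is to derive the corollary by integrating the pointwise inequality from Theorem \ref{Main} along an arbitrary smooth space-time path $\gamma:[t_1,t_2]\to M$ with $\gamma(t_1)=x_1$, $\gamma(t_2)=x_2$, and then optimizing over such paths. This is the same template used in the proof of Corollary~\ref{claHar}, just with the $\varepsilon$-Ricci flow replacing a fixed metric and with the exponential weight $e^{t}$ in place of $e^{-at}$.

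First I would evaluate $\ln f$ along $\gamma$ and compute, with respect to the metric $g(t)$,
\[
\frac{d}{dt}\ln f(\gamma(t),t)=\frac{\partial}{\partial t}\ln f+\nabla\ln f\cdot\frac{d\gamma}{dt}.
\]
Substituting the lower bound from Theorem~\ref{Main}, namely $\frac{\partial}{\partial t}\ln f\ge |\nabla\ln f|^{2}-\ln f-\frac{1}{e^{t}-1}$, and then completing the square in the term $|\nabla\ln f|^{2}+\nabla\ln f\cdot\frac{d\gamma}{dt}\ge -\tfrac14\big|\frac{d\gamma}{dt}\big|^{2}$, I obtain
\[
\frac{d}{dt}\ln f(\gamma(t),t)+\ln f(\gamma(t),t)\ge -\frac14\Big|\frac{d\gamma}{dt}(t)\Big|^{2}-\frac{1}{e^{t}-1}.
\]

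Next I would absorb the $\ln f$ term into the $t$-derivative using the integrating factor $e^{t}$. Multiplying the preceding inequality by $e^{t}$ turns the left-hand side into $\frac{d}{dt}\bigl(e^{t}\ln f(\gamma(t),t)\bigr)$, and the right-hand side becomes $-\tfrac{e^{t}}{4}\big|\frac{d\gamma}{dt}\big|^{2}-\frac{e^{t}}{e^{t}-1}$. Integrating from $t_{1}$ to $t_{2}$ and using $\int_{t_{1}}^{t_{2}}\frac{e^{t}}{e^{t}-1}\,dt=\ln\!\big(\frac{e^{t_{2}}-1}{e^{t_{1}}-1}\big)=\ln\!\big(\frac{1-e^{t_{2}}}{1-e^{t_{1}}}\big)$ yields
\[
e^{t_{1}}\ln f(x_{1},t_{1})-e^{t_{2}}\ln f(x_{2},t_{2})\le \frac14\!\int_{t_{1}}^{t_{2}}\!e^{t}\Big|\frac{d\gamma}{dt}(t)\Big|^{2}dt+\ln\!\left(\frac{1-e^{t_{2}}}{1-e^{t_{1}}}\right).
\]
Taking the infimum over admissible paths $\gamma$ introduces exactly $\Gamma(x_{1},t_{1},x_{2},t_{2})$, which finishes the proof.

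The only nontrivial point, which I would treat with care, is that the norm $|\frac{d\gamma}{dt}|$ and the gradient $\nabla\ln f$ are computed with respect to the evolving metric $g(t)$. The completion of the square $|\nabla\ln f|^{2}+\langle\nabla\ln f,\frac{d\gamma}{dt}\rangle_{g(t)}\ge -\frac14|\frac{d\gamma}{dt}|_{g(t)}^{2}$ is nevertheless valid pointwise in $t$ because it only uses the inner product structure of $g(t)$ at each instant. No curvature evolution arguments are required here beyond what is already encoded in Theorem~\ref{Main}, and the existence of minimizing paths for $\Gamma$ is not needed because we only take an infimum. The principal (mild) obstacle is bookkeeping of signs to confirm $\ln\!\big(\frac{1-e^{t_{2}}}{1-e^{t_{1}}}\big)=\ln\!\big(\frac{e^{t_{2}}-1}{e^{t_{1}}-1}\big)$ on the relevant range of $t$, which is immediate once $t_{1}>0$.
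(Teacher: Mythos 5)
Your argument is correct and is exactly the standard integration-along-space-time-paths computation that the paper itself uses to prove the analogous Corollary~\ref{claHar}; the paper omits the proof of Corollary~\ref{corRF} precisely because it is this same routine. The completion of the square, the integrating factor $e^{t}$ turning the left side into $\frac{d}{dt}(e^{t}\ln f)$, and the evaluation $\int_{t_1}^{t_2}\frac{e^t}{e^t-1}\,dt=\ln\frac{e^{t_2}-1}{e^{t_1}-1}=\ln\frac{1-e^{t_2}}{1-e^{t_1}}$ are all as the author intends, and your remark that the pointwise square-completion is valid with respect to the evolving metric $g(t)$ is the only subtlety worth flagging.
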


Theorem \ref{Main} improves Theorem D because the exponential
correction term $\frac{1}{e^t-1}$ is smaller than $\frac 1t$ for all $t>0$.
If we take $\varepsilon=0$, we can get the differential Harnack inequality
of Cao, Fayyazuddin Ljungberg and Liu~\cite{[CaoLL]} on closed surfaces.
\begin{corollary}\label{interfix}(Cao, Fayyazuddin Ljungberg and Liu~\cite{[CaoLL]})
If $f: M\times[0,T)\to\mathbb{R}$, is a positive solution to the
nonlinear heat equation
\[
\frac{\partial}{\partial t}\omega=\Delta\omega-\omega\ln\omega
\]
on a closed surface $(M, g)$ with
$R>0$, then for all time $t\in(0,T)$,
\[
\frac{\partial}{\partial t}\ln f-|\nabla\ln f|^2+\ln f
+\frac{1}{e^t-1}=\Delta\ln f+\frac{1}{e^t-1}\geq 0.
\]
\end{corollary}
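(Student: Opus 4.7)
The plan is to obtain this corollary as the $\varepsilon=0$ specialization of Theorem \ref{Main}, exploiting the fact that both the evolving metric and the extra $\varepsilon R\omega$ term degenerate to something trivial in this limit.

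First I would observe that when $\varepsilon=0$, the $\varepsilon$-Ricci flow \eqref{psRF} becomes $\partial_t g_{ij}=0$, so $g(t)\equiv g$ is the fixed surface metric of the corollary. At the same time, the heat-type equation \eqref{foreq1} collapses to $\omega_t=\Delta\omega-\omega\ln\omega$, which is exactly the equation in the statement. Hence any positive solution $f$ to this fixed-metric equation on a closed surface with $R>0$ satisfies all the hypotheses of Theorem \ref{Main} with $\varepsilon=0$.

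Applying Theorem \ref{Main} then yields, for all $t\in(0,T)$,
\[
\Delta\ln f+\varepsilon R+\frac{1}{e^t-1}\geq 0,
\]
and since $\varepsilon R=0$, this is precisely $\Delta\ln f+\frac{1}{e^t-1}\geq 0$, which also equals $\partial_t\ln f-|\nabla\ln f|^2+\ln f+\frac{1}{e^t-1}$ by the PDE. This completes the reduction.

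The only thing worth checking is that the proof of Theorem \ref{Main} does not secretly rely on $\varepsilon$ being strictly positive; since $\varepsilon$ enters only as a coefficient in the evolution of the Harnack quantity and the metric, every term degenerates continuously at $\varepsilon=0$, so no obstacle arises. As a sanity check, one can also view the inequality as the $a=-1$, $n=2$, case (ii) instance of Theorem A of Cao--Fayyazuddin Ljungberg--Liu, noting that on a surface $R>0$ implies $\mathrm{Ric}=\tfrac{R}{2}g>0$, so their nonnegative Ricci hypothesis is met and their formula $\tfrac{an}{2(1-e^{-at})}=\tfrac{1}{e^t-1}$ matches our correction term exactly.
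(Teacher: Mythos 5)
Your proof is correct and matches the paper's own argument: the paper derives Corollary \ref{interfix} precisely by setting $\varepsilon=0$ in Theorem \ref{Main}, noting that the $\varepsilon$-Ricci flow freezes and the source term $\varepsilon R\omega$ drops out. Your additional sanity check---identifying the result with the $a=-1$, $n=2$ case of Theorem A and confirming that $R>0$ forces $\mathrm{Ric}>0$ on a surface---is a useful cross-check consistent with the paper's attribution of the corollary to Cao--Fayyazuddin Ljungberg--Liu.
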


If we take $\varepsilon=1$ in Theorem \ref{Main}, we then get:
\begin{corollary}\label{formainsur}
Let $(M,g(t))$, $t\in[0,T)$, be a solution to the Ricci flow on a
closed surface with $R>0$. If $f$ is a positive solution to the
nonlinear heat equation \eqref{hform3}, then for all time
$t\in(0,T)$,
\[
\frac{\partial}{\partial t}\ln f-|\nabla\ln f|^2+\ln f
+\frac{1}{e^t-1}=\Delta\ln f+R+\frac{1}{e^t-1}\geq 0.
\]
\end{corollary}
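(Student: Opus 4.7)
The plan is to adapt the parabolic maximum principle argument of Theorem A, but replace the correction $\tfrac{1}{t}$ from Theorem D by $\psi(t):=\tfrac{1}{e^{t}-1}$, the (blowing-up at $0$) positive solution of the logistic ODE
\[
\psi'+\psi+\psi^{2}=0.
\]
Let $L=\ln f$. Equation \eqref{foreq1} gives $L_{t}=\Delta L+|\nabla L|^{2}-L+\varepsilon R$, so the target is equivalent to $P(x,t):=Q+\psi(t)\geq 0$ where $Q:=\Delta L+\varepsilon R$. The choice of ODE for $\psi$ is dictated by the $-L$ term in the equation together with the quadratic feedback $(\Delta L)^{2}$ that the Bochner formula will produce.

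Next I would compute the evolution of $Q$ on a surface under the $\varepsilon$-Ricci flow \eqref{psRF}. The three ingredients are: (i) since $n=2$, the contracted Christoffel evolution $g^{ij}\partial_{t}\Gamma^{k}_{ij}=\tfrac{n-2}{2}\varepsilon\nabla^{k}R$ vanishes, so $\partial_{t}\Delta L=\varepsilon R\,\Delta L+\Delta L_{t}$; (ii) the scalar curvature satisfies $\partial_{t}R=\varepsilon\Delta R+\varepsilon R^{2}$; and (iii) Bochner on a surface reads $\Delta|\nabla L|^{2}=2|\nabla^{2}L|^{2}+2\langle\nabla L,\nabla\Delta L\rangle+R|\nabla L|^{2}$. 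A routine expansion yields
\[
Q_{t}-\Delta Q-2\langle\nabla L,\nabla Q\rangle=(\varepsilon R-1)Q+2|\nabla^{2}L|^{2}+R|\nabla L|^{2}+\varepsilon R-2\varepsilon\langle\nabla L,\nabla R\rangle+\varepsilon^{2}\Delta R.
\]

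Now argue by contradiction. Since $\psi\to+\infty$ as $t\to 0^{+}$ and $Q$ is bounded on the closed surface, $P>0$ on an initial time interval. If $P$ ever fails to be nonnegative, pick the first $t_{0}>0$ and a point $x_{0}$ where $P(x_{0},t_{0})=0$; there $\nabla P=0$, $\Delta P\geq 0$, $P_{t}\leq 0$, and $Q=-\psi$. Plug into the evolution identity the surface inequality $2|\nabla^{2}L|^{2}\geq(\Delta L)^{2}=(\psi+\varepsilon R)^{2}$ together with the Cauchy--Schwarz bound $R|\nabla L|^{2}-2\varepsilon\langle\nabla L,\nabla R\rangle\geq-\varepsilon^{2}|\nabla R|^{2}/R$ and the identity $\varepsilon^{2}\Delta R-\varepsilon^{2}|\nabla R|^{2}/R=\varepsilon^{2}R\Delta\ln R$. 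By design the block $\psi+\psi^{2}+\psi'$ cancels, and what remains is
\[
P_{t}\geq\varepsilon R\bigl(\psi+\varepsilon R+1+\varepsilon\Delta\ln R\bigr).
\]

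The main obstacle is the a priori sign-indefinite quantity $\varepsilon\Delta\ln R$. I would dispose of it by invoking Hamilton's trace Harnack inequality for surfaces: under the Ricci flow on a closed surface with $R>0$ one has $\Delta\ln R+R+\tfrac{1}{t}\geq 0$. Since in dimension two the $\varepsilon$-Ricci flow \eqref{psRF} is just the time-rescaling $s=\varepsilon t$ of the Ricci flow, this rescales to $\varepsilon\Delta\ln R\geq-\varepsilon R-\tfrac{1}{t}$, so
\[
P_{t}\geq\varepsilon R\bigl(\psi+1-\tfrac{1}{t}\bigr).
\]
A short algebra check shows $\psi+1-\tfrac{1}{t}=\dfrac{(t-1)e^{t}+1}{t(e^{t}-1)}>0$ for all $t>0$, since the numerator vanishes at $0$ with derivative $te^{t}>0$. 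This gives the strict contradiction with $P_{t}\leq 0$ as soon as $\varepsilon R>0$; the degenerate case $\varepsilon=0$ simply drops the curvature remainder and leaves $P_{t}\geq\psi+\psi^{2}+\psi'=0$, which is enough to conclude $P\geq 0$ by the standard perturbation $\psi\rightsquigarrow\psi+\delta$ and $\delta\downarrow 0$.
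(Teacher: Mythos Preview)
Your argument is correct and is essentially the paper's own proof of Theorem~\ref{Main} (of which the corollary is the case $\varepsilon=1$), modulo the sign convention $L=\ln f$ versus the paper's $u=-\ln f$ and the fact that you evaluate at the first zero of $P$ while the paper writes the full differential inequality for $H_\varepsilon-\tfrac{1}{e^t-1}$ before invoking the maximum principle. The key steps---Bochner on a surface, the completion of squares absorbing $R|\nabla L|^2-2\varepsilon\langle\nabla L,\nabla R\rangle$ into $\varepsilon^2 R\,\Delta\ln R$, Chow's trace Harnack $\varepsilon(\Delta\ln R+R)\ge -1/t$ for the $\varepsilon$-Ricci flow, and the elementary check that $\tfrac{1}{e^t-1}+1-\tfrac{1}{t}>0$---coincide with the paper's.
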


\begin{remark}
Theorem \ref{Main} is a nonlinear version of the Chow's interpolated
Harnack inequality \cite{[Chow3]} which links Corollary \ref{interfix}
to Corollary \ref{formainsur}.
\end{remark}

Now we shall prove Theorem \ref{Main} via the maximum principle.
\begin{proof}[Proof of Theorem \ref{Main}]
Let $(M,g(t))$, $t\in[0,T)$, be a solution to the $\varepsilon$-Ricci flow
\eqref{psRF} on a closed surface with $R>0$. Let $f$ be a positive solution
to the nonlinear heat equation \eqref{foreq1}. By the maximum principle,
we conclude that the solution will remain positive along the
$\varepsilon$-Ricci flow when scalar curvature is positive. If we let
\[
u=-\ln f,
\]
then $u$ satisfies the equation
\[
\frac{\partial}{\partial t}u=\Delta u-|\nabla u|^2-\varepsilon R-u.
\]
The proof involves a direct computation and the parabolic maximum
principle.

Under the $\varepsilon$-Ricci flow \eqref{psRF} on a
closed surface, we have that
\[
\frac{\partial R}{\partial t}=\varepsilon(\Delta R+R^2)
\]
and
\[
\frac{\partial}{\partial t}(\Delta)=\varepsilon R\Delta,
\]
where the Laplacian $\Delta$ is acting on functions. Define the Harnack quantity
\begin{equation}\label{Harquant}
H_\varepsilon:=\Delta u-\varepsilon R.
\end{equation}
Using the evolution equations above, we first compute that
\begin{equation*}
\begin{aligned}
\frac{\partial}{\partial t}H_\varepsilon&=\Delta\left(\frac{\partial}{\partial
t}u\right)+\left(\frac{\partial}{\partial t}\Delta\right)u
-\varepsilon\frac{\partial R}{\partial t}\\
&=\Delta\left(\Delta u-|\nabla u|^2-\varepsilon
R-u\right)+\varepsilon R\Delta u-\varepsilon\frac{\partial
R}{\partial t}\\
&=\Delta H_\varepsilon-\Delta|\nabla u|^2-\Delta u
+\varepsilon RH_\varepsilon+\varepsilon^2
R^2-\varepsilon\frac{\partial R}{\partial t}.
\end{aligned}
\end{equation*}
Since
\[
\Delta|\nabla u|^2=2|\nabla\nabla u|^2+2\nabla\Delta u\cdot\nabla u+R|\nabla u|^2
\]
on a two-dimensional surface, we then have
\begin{equation*}
\begin{aligned}
\frac{\partial}{\partial t}H_\varepsilon
&=\Delta H_\varepsilon-2|\nabla\nabla u|^2
-2\nabla\Delta u\cdot\nabla u-R|\nabla u|^2\\
&\quad+\varepsilon RH_\varepsilon+\varepsilon^2
R^2-\varepsilon\frac{\partial R}{\partial t}-\Delta u\\
&=\Delta H_\varepsilon-2|\nabla\nabla u|^2
-2\nabla H_\varepsilon\cdot\nabla u-2\varepsilon\nabla R\cdot\nabla u\\
&\quad-R|\nabla u|^2+\varepsilon RH_\varepsilon+\varepsilon^2
R^2-\varepsilon\frac{\partial R}{\partial t}-\Delta u\\
&=\Delta H_\varepsilon-2\left|\nabla_i\nabla_ju-\frac\varepsilon2Rg_{ij}\right|^2
-2\varepsilon R\Delta u-2\nabla H_\varepsilon\cdot\nabla u\\
&\quad-2\varepsilon\nabla R\cdot\nabla u-R|\nabla u|^2
+\varepsilon RH_\varepsilon+2\varepsilon^2R^2
-\varepsilon\frac{\partial R}{\partial t}-\Delta u.
\end{aligned}
\end{equation*}
Since $\Delta u=H_\varepsilon+\varepsilon R$ by \eqref{Harquant}
these equalities become
\begin{equation*}
\begin{aligned}
\frac{\partial}{\partial t}H_\varepsilon
&=\Delta H_\varepsilon-2\left|\nabla_i\nabla_ju-\frac\varepsilon2Rg_{ij}\right|^2
-\varepsilon RH_\varepsilon-2\nabla H_\varepsilon\cdot\nabla u\\
&\quad-2\varepsilon\nabla R\cdot\nabla u-R|\nabla u|^2
-\varepsilon\frac{\partial R}{\partial t}-\Delta u.
\end{aligned}
\end{equation*}
Rearranging terms yields
\begin{equation}
\begin{aligned}\label{kevoposur}
\frac{\partial}{\partial t}H_\varepsilon&=\Delta H_\varepsilon-
2\left|\nabla_i\nabla_ju-\frac\varepsilon2Rg_{ij}\right|^2
-2\nabla H_\varepsilon\cdot\nabla u-\varepsilon RH_\varepsilon\\
&\quad-R\left|\nabla u+\varepsilon\nabla\ln R\right|^2 -\varepsilon
R\left(\frac{\partial\ln R}{\partial t}
-\varepsilon|\nabla\ln R|^2\right)-\Delta u\\
&\leq\Delta H_\varepsilon-H_\varepsilon^2-2\nabla
H_\varepsilon\cdot\nabla u-(\varepsilon R+1)H_\varepsilon+\frac
\varepsilon tR-\varepsilon R.
\end{aligned}
\end{equation}
The reason for this last inequality is that the trace Harnack inequality for the
$\varepsilon$-Ricci flow on a closed surface proved in \cite{[Chow3]}
states that
\[
\frac{\partial\ln R}{\partial t}-\varepsilon|\nabla\ln
R|^2=\varepsilon(\Delta\ln R+R)\geq -\frac 1t,
\]
since $g(t)$ has positive scalar curvature. Besides this, we also
used \eqref{Harquant} and the elementary inequality
\[
\left|\nabla_i\nabla_ju-\frac \varepsilon 2Rg_{ij}\right|^2\geq
\frac 12(\Delta u-\varepsilon R)^2=\frac 12H_\varepsilon^2.
\]
Adding $-\frac{1}{e^t-1}$ to $H_\varepsilon$ in \eqref{kevoposur} yields
\begin{equation}
\begin{aligned}\label{epolra}
\frac{\partial}{\partial t}\left(H_\varepsilon-\frac{1}{e^t-1}
\right)&\leq\Delta \left(H_\varepsilon-\frac{1}{e^t-1}\right)
-2\nabla\left(H_\varepsilon-\frac{1}{e^t-1}\right)\cdot\nabla u\\
&\quad-\left(H_\varepsilon+\frac{1}{e^t-1}\right)\left(H_\varepsilon-\frac{1}{e^t-1}\right)
-(\varepsilon R+1)\left(H_\varepsilon-\frac{1}{e^t-1}\right)\\
&\quad+\frac{e^t}{(e^t-1)^2}-\frac{1}{(e^t-1)^2}-\frac{\varepsilon R+1}{e^t-1}
+\frac{\varepsilon R}{t}-\varepsilon R\\
&=\Delta \left(H_\varepsilon-\frac{1}{e^t-1}\right)
-2\nabla\left(H_\varepsilon-\frac{1}{e^t-1}\right)\cdot\nabla u\\
&\quad-\left(H_\varepsilon+\frac{1}{e^t-1}\right)\left(H_\varepsilon-\frac{1}{e^t-1}\right)
-(\varepsilon R+1)\left(H_\varepsilon-\frac{1}{e^t-1}\right)\\
&\quad-\varepsilon R\left(\frac{1}{e^t-1}+1-\frac 1t\right).
\end{aligned}
\end{equation}
Note that we claim:
\[
\frac{1}{e^t-1}+1-\frac 1t>0
\]
for all $t>0$, which can be explained as follows. We first observe that
\[
\frac{1}{e^t-1}+1-\frac 1t=\frac{te^t-e^t+1}{t(e^t-1)}.
\]
Since $t(e^t-1)>0$, then we only need to prove $te^t-e^t+1>0$. This is easy!
Since $te^t-e^t+1\mid_{t=0}=0$ and
\[
\frac{d}{dt}(te^t-e^t+1)=te^t>0
\]
for all $t>0$, the function $te^t-e^t+1$ is increasing for $t\geq 0$.
Therefore
\[
te^t-e^t+1>0
\]
for $t>0$ and we prove that
\[
\frac{1}{e^t-1}+1-\frac 1t>0
\]
for all $t>0$. Thus \eqref{epolra} becomes
\begin{equation}
\begin{aligned}\label{epolra2}
\frac{\partial}{\partial t}\left(H_\varepsilon-\frac{1}{e^t-1}
\right)&\leq\Delta \left(H_\varepsilon-\frac{1}{e^t-1}\right)
-2\nabla\left(H_\varepsilon-\frac{1}{e^t-1}\right)\cdot\nabla u\\
&\quad-\left(H_\varepsilon+\frac{1}{e^t-1}\right)\left(H_\varepsilon-\frac{1}{e^t-1}\right)
-(\varepsilon R+1)\left(H_\varepsilon-\frac{1}{e^t-1}\right).
\end{aligned}
\end{equation}
Clearly, for $t$ small enough we have $H_\varepsilon-\frac{1}{e^t-1}<0$.
Since $R>0$, applying the maximum principle to the evolution
formula \eqref{epolra} we conclude $H_\varepsilon-\frac{1}{e^t-1}\leq 0$
for all positive time $t$, and the proof of this theorem is completed.
\end{proof}

\begin{remark}
A question can be naturally posed: can one improve Theorem C (high
dimensional case) by considering the exponential correction term
instead of the polynomial correction term in differential Harnack quantities?
\end{remark}

\section{New differential Harnack inequality\\
without curvature condition}\label{sec4}
In this section, we will study differential Harnack inequalities for a positive solution
$f(x,t)<1$ to the nonlinear heat equation
\begin{equation}\label{Richeat}
\frac{\partial}{\partial t}\omega=\Delta\omega-\omega\ln\omega
\end{equation}
with the metric evolved by the Ricci flow \eqref{RF} on an
$n$-dimensional closed manifold. This equation has been considered
by S.-Y. Hsu \cite{[Hsu]} and the author \cite{[Wu1]}. In \cite{[Wu1]} the
author proved the following result without any curvature assumption.

\vspace{.1in}

\noindent \textbf{Theorem E} (Wu~\cite{[Wu1]}).
\emph{Let $(M,g(t))$, $t\in[0,T)$, be a solution to the
Ricci flow \eqref{RF} on a closed manifold. Let $f<1$ be a positive
solution to the nonlinear heat equation \eqref{Richeat} and $u=-\ln f$.
Then for all time $t\in(0,T)$,
\[
|\nabla u|^2-\frac ut\leq 0.
\]}
Theorem E can be also regarded as a nonlinear version of Cao and
Hamilton's result (see Theorem 5.1 in \cite{[CaoxHa]}). Now we
can improve this result as follows.

\begin{theorem}\label{Mainimp}
Let $(M,g(t))$, $t\in[0,T)$, be a solution to the
Ricci flow on a closed manifold. Let $f<1$ be a positive
solution to the nonlinear heat equation \eqref{Richeat} and $u=-\ln f$.
Then for all $x\in M^n$, $t\in(0,T)$:
\[
|\nabla u|^2-\frac{u}{e^t-1}\leq 0.
\]
\end{theorem}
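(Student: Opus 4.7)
The plan is to mirror the proof of Theorem E with the polynomial correction $u/t$ replaced by its exponential analogue $u/(e^t-1)$: setting $Q := |\nabla u|^2 - \frac{u}{e^t-1}$, I will establish a parabolic differential inequality for $Q$ to which the scalar maximum principle on the closed manifold $M$ can be applied. A direct calculation from \eqref{Richeat} shows that $u = -\ln f$ satisfies
\[
\frac{\partial u}{\partial t} = \Delta u - |\nabla u|^2 - u.
\]
Differentiating $|\nabla u|^2 = g^{ij}\nabla_i u\,\nabla_j u$ along the Ricci flow produces a term $2\,\mathrm{Rc}(\nabla u,\nabla u)$ from $\partial_t g^{ij} = 2R^{ij}$; invoking Bochner's identity $\Delta|\nabla u|^2 = 2|\nabla^2 u|^2 + 2\nabla u\cdot\nabla\Delta u + 2\,\mathrm{Rc}(\nabla u,\nabla u)$ then cancels this Ricci contribution exactly, which is the structural reason behind the absence of any curvature hypothesis, and yields
\[
\frac{\partial}{\partial t}|\nabla u|^2 = \Delta|\nabla u|^2 - 2|\nabla^2 u|^2 - 2\nabla u\cdot\nabla|\nabla u|^2 - 2|\nabla u|^2.
\]

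Combining these two identities with $\partial_t\bigl(\tfrac{u}{e^t-1}\bigr) = \tfrac{\partial_t u}{e^t-1} - \tfrac{u\,e^t}{(e^t-1)^2}$, using $\nabla|\nabla u|^2 = \nabla Q + \tfrac{\nabla u}{e^t-1}$ to absorb the transport term, substituting $|\nabla u|^2 = Q + \tfrac{u}{e^t-1}$ wherever it appears, and discarding the nonpositive $-2|\nabla^2 u|^2$, I expect the residual terms $\tfrac{u-|\nabla u|^2}{e^t-1} + \tfrac{u\,e^t}{(e^t-1)^2}$ to collapse so that
\[
\frac{\partial Q}{\partial t} \le \Delta Q - 2\nabla u\cdot\nabla Q - \Bigl(2 + \frac{1}{e^t-1}\Bigr)Q.
\]
Verifying this cancellation is the delicate step: the linear-in-$u$ pieces produced by differentiating the time-dependent correction must pair precisely with the $-2|\nabla u|^2$ coming from $\partial_t u$, in such a way that $u$ and $|\nabla u|^2$ disappear and only $Q$ with a negative coefficient remains.

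Finally, the hypothesis $0<f<1$ gives $u>0$ throughout $M\times[0,T)$, so by compactness $\min_M u(\cdot,t)$ stays uniformly positive for $t$ near $0$; since $\tfrac{1}{e^t-1}\to +\infty$ as $t\to 0^+$ while $|\nabla u|^2$ remains bounded on the closed manifold, $Q(\cdot,t)\to -\infty$ uniformly as $t\to 0^+$. Because the coefficient of $Q$ on the right-hand side of the displayed inequality is strictly negative, the scalar parabolic maximum principle on $M$ (applied at a hypothetical first time and point where $Q$ reaches $0$) then forces $Q\le 0$ on all of $M\times(0,T)$, which is the desired estimate.
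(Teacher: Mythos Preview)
Your proposal is correct and follows essentially the same route as the paper: you define the same Harnack quantity, derive the same evolution formula for $|\nabla u|^2$ via the Bochner--Ricci cancellation, and combine with the evolution of $u/(e^t-1)$ to obtain exactly the parabolic inequality $\partial_t Q \le \Delta Q - 2\nabla u\cdot\nabla Q - \bigl(2+\tfrac{1}{e^t-1}\bigr)Q$ (the paper in fact records this as an equality retaining the $-2|\nabla^2 u|^2$ term, but the maximum-principle argument is identical). Your justification of the initial negativity of $Q$ as $t\to 0^+$ is slightly more detailed than the paper's, but otherwise the arguments coincide.
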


We will prove Theorem \ref{Mainimp} by the standard parabolic
maximum principle. Let $f(x,t)<1$ be a positive solution to the nonlinear heat
equation \eqref{Richeat} under the Ricci flow \eqref{RF} on a closed manifold
$M$. If we let $u=-\ln f$, then $u>0$ and $u$ solves to
\[
\frac{\partial}{\partial t}u=\Delta u-|\nabla u|^2-u.
\]
Note that here $0<f<1$ is preserved under the Ricci flow by the maximum
principle (see \cite{[Wu1]}).
\begin{proof}[Proof of Theorem \ref{Mainimp}]
Following the
arguments of \cite{[Wu1]}, we let
\[
H:=|\nabla u|^2-\frac{u}{e^t-1}.
\]
We first compute that $|\nabla u|^2$ satisfies
\[
\frac{\partial}{\partial t}|\nabla u|^2=\Delta |\nabla u|^2
-2|\nabla\nabla u|^2-2\nabla u\cdot\nabla (|\nabla u|^2)-2|\nabla u|^2.
\]
Then we also have
\[
\frac{\partial}{\partial t}\left(\frac{u}{e^t-1}\right)=\Delta \left(\frac{u}{e^t-1}\right)
-\frac{|\nabla u|^2+u}{e^t-1}-\frac{ue^t}{(e^t-1)^2}.
\]
Combining above equations yields
\[
\frac{\partial}{\partial t}H=\Delta H-2\nabla u\cdot\nabla H
-2|\nabla\nabla u|^2-\left(2+\frac{1}{e^t-1}\right)H.
\]
Notice that if $t$ small enough, then $H<0$. Then applying the maximum principle
to this equation, we obtain $H<0$ for all $t>0$.
\end{proof}


\end{document}